\def\N{\mathbb N}
\def\N0{\mathbb N_0}
\def\Z{\mathbb Z}
\def\real{\mathbb R}
\def\alg{\mathbb A}
\def\Balg{\mathbb B}
\def\graph{{\mathcal G}}
\def\vertexset{{\mathcal V}}
\def\edgeset{{\mathcal E}}
\def\graphbar{\overline{\graph}}
\def\wt{\mu}
\def\cond{C}
\def\rest{R}
\def\dop{{\mathcal L}}
\def\sg{{\mathcal S}}
\def\domain {{\mathcal D}}
\title{\bf Myopic Models of Population Dynamics on Infinite Networks} 
\author{Robert Carlson \\
Department of Mathematics \\ 
University of Colorado at Colorado Springs \\
carlson@math.uccs.edu}
\newtheorem{thm}{Theorem}[section]
\newtheorem{cor}[thm]{Corollary}
\newtheorem{lem}[thm]{Lemma}
\newtheorem{prop}[thm]{Proposition}
\theoremstyle{definition}
\theoremstyle{remark}
\newcommand{\thmref}[1]{Theorem~\ref{#1}}
\newcommand{\lemref}[1]{Lemma~\ref{#1}}
\newcommand{\corref}[1]{Corollary~\ref{#1}}
\newcommand{\propref}[1]{Proposition~\ref{#1}}
 \numberwithin{equation}{section}
\begin{document}

\maketitle

\begin{abstract}

Reaction-diffusion equations are treated on infinite networks using semigroup methods.
To blend high fidelity local analysis with coarse remote modeling,
initial data and solutions come from a uniformly closed algebra generated by functions 
which are flat at infinity.  The algebra is associated with a compactification of the network
which facilitates the description of spatial asymptotics.  Diffusive effects disappear at infinity,
greatly simplifying the remote dynamics.  Accelerated diffusion models with conventional 
eigenfunction expansions are constructed to provide opportunities for finite dimensional approximation.

\end{abstract}

\vskip 25pt

{\it Mathematics Subject Classification.}  Primary 34B45 

{\it Keywords.} network reaction-diffusion problems, 
network population models, nonlinear diffusions

\newpage

\tableofcontents

\newpage 

\section{Introduction}

Recent research has drawn attention to a wide variety of massive networks with evolving populations or states \cite{NBW}.
These may be social networks, in which diseases \cite{Keeling,Newman}, opinions, or knowledge propagate, 
computer networks with spreading malware, or spatial environments such as road networks
or river systems \cite{Ramirez,SCA}.  Classical studies of the dynamics of interacting populations (or chemical species) in a spatial domain
often utilize nonlinear reaction diffusion partial differential equations  \cite{Fife, Murray1, Murray2}, having the form
\begin{equation} \label{demodel}
\frac{dp}{dt} + \Delta p = f(p),
\end{equation}
with the Laplacian $\Delta $ generating the diffusion, while $f(p)$ or $f(t,p)$ describes the growth and interaction of several
populations at a site. This work treats reaction diffusion models on infinite discrete networks,
with the classical population density $p(t,x)$ replaced by a function $p(t,v)$ on the network vertices.
The role of the diffusion operator $\Delta $ will be played by a spatial difference operator.  
There has been some previous work considering reaction-diffusion equations on networks \cite{Colizza},
apparently with aims different from those considered here. 

Massive networks can be so enormous or structurally complex that finiteness may be a largely irrelevant property
to a network modeler with limited information about the remote network structure or vertex states.
In this work,  infinite networks are combined with restricted population models
to describe population dynamics with simple behavior 'at infinity'.
The viewpoint emphasizes high fidelity local modeling with coarse descriptions of distant features,
somewhat like the famous Saul Steinberg cartoon (New Yorker magazine, March 29,1976)  which 
shows a myopic map of the United States as seen from Ninth Avenue in Manhattan.
The fidelity of features decreases rather dramatically with the distance from Manhattan,
with buildings on Tenth Avenue presented in detail, Texas acknowledged only by name, 
and numerous states completely absent.  

A thought experiment may help motivate the coming developments.
Suppose a public health official in, say, Iceland wants to model the local impact of a global epidemic 
dispersed by physical contact.  Each person is represented by a graph vertex, while people who have physical contact
are joined by a graph edges.  Edges have weights (distances) which are inversely proportional to  
the frequency of contact or transmission rate.  The model tracks the population of pathogens
for each person, using a system \eqref{demodel}.
A lack of information or a desire to manage the complexity of the model
leads the official to divide the rest of the world into weakly interacting blocks, in which
the pathogen load is assumed the same for each individual.  Thus North America,
India, and Japan may each be considered a block with distinct estimates of average pathogen load.     

Two aspects of such modeling will play an important role in the subsequent developments.
First, the granularity of the block structure should be adjustable: perhaps major urban areas 
or physically separated regions within countries could be modeled as distinct blocks.
Second, since the blocks are large but weakly interacting, it is expected that remote coarsely modeled parts of the social network
will feel the diffusive influence of other blocks slowly, with the local dynamics described by $f(p)$ playing a dominant role.
These two aspects are treated by selecting a space of 'eventually flat' functions as the starting point for
population modeling, and by using an infinite network, which facilitates the description and analysis of 'remote'
vertices.  Banach space semigroup methods will be used to analyze these models.  When the initial class of population states
is completed in the $l^{\infty}$ norm, the resulting Banach algebra can be identified with
the continuous functions on a compactification of the network, which simplifies the analysis of the 
population dynamics modeled by \eqref{demodel}.  

The analysis of \eqref{demodel} on networks begins in next section with the introduction of 
locally finite infinite graphs with edge weights, vertex weights, and their classical Banach spaces.
The subalgebra $\alg \subset l^{\infty} $ is introduced.  This is the algebra of real valued functions on the vertex set
whose values only change across a finite set of edges.  A second algebra $\Balg $ is obtained by completing $\alg $
in the $l^{\infty }$ norm.   Diffusions are generated by local 'Laplace' operators $\Delta $ which depend on the edge and vertex weights. 
The emphasis is on bounded operators $\Delta $.
Basic properties of these operators and the semigroups $\exp (-t\Delta )$ they generate are reviewed.
The results here are known, but are scattered in the literature.  Proofs are outlined so the material is readily
accessible.  The section ends with the observation that $\Balg $ is an invariant subspace for $\Delta $.

The algebras $\alg $ and $\Balg $ are studied in the third section.  $\Balg $ is naturally identified with the space of all continuous
functions on a compactification $\graphbar $ of $\graph $ which is obtained by completing $\graph $ in a new metric satisfying the
condition that the sum of the edge lengths is finite.  The compactification $\graphbar $ is a totally disconnected compact metric space.
The closed ideals of $\Balg $ obtained by requiring functions to vanish on closed subsets of the boundary of $\graphbar $
are also invariant subspaces for $\Delta $.

The fourth section begins with a treatment of \eqref{demodel} on the compactification $\graphbar $.
On the boundary of $\graphbar $, the solutions are not affected by the diffusion; the dynamics simply follows the
reduced system  of ordinary differential equations.    Operators $\Delta _{\rho }$ associated to the modified metrics 
mentioned above are treated.  These are typically unbounded, with domains determined by vanishing conditions on 
closed subsets of the boundary of $\graphbar $.   When the vertex weights also satisfy a finiteness condition, the operators
$\Delta _{\rho}$ have compact resolvent, and eigenfunction expansions are available. 
If the metrics $\rho $ agree with the original metric on suitable subgraphs,
the semigroups $\exp(-t \Delta _{\rho })$ converge strongly to $\exp (-t \Delta )$.  
This convergence extends to solutions of the corresponding nonlinear equations \eqref{demodel}.

Funding supporting this work was provided by a seed grant from the University of Colorado at Colorado Springs
Center for the BioFrontiers Institute.
The author thanks R. Cascaval and Y. Zhang for helpful conversations.

\section{Networks and some local operators}

This section introduces most of the main objects of our study:  locally finite weighted graphs, 
Banach spaces of functions on the vertex set, 'Laplace' operators $\Delta $ acting on functions, and the semigroups $\exp (-t\Delta )$.
The operators $\Delta $ and  $\exp (-t\Delta )$ are treated on the classical $l^p$ spaces,
with particular focus on two subalgebras of $l^{\infty}$, $\alg $ and $\Balg $.
Bounded operators $\Delta $ are emphasized for most of this work.
In addition to introducing basic concepts, this section includes a number of previously known 
results whose proofs are scattered in the literature.

\subsection{Networks}

In this work a network or graph $\graph $ (the terms will be used interchangeably)  
will have  a countable vertex set $\vertexset$ and edge set $\edgeset$.  
Edges are undirected, with $u \sim v$ meaning the unordered pair $[u,v]$ is in $ \edgeset$.
$\graph $ is simple; there are no loops ($u \not= v$), and at most one edge joins a 
vertex pair.  Each vertex will have at least one and at most finitely many incident edges.  
General references on graphs are \cite{Chung,Diestel}.

$\graph $ is equipped with a vertex weight function $\wt : \vertexset \to (0,\infty )$, 
and an edge weight function $\rest :\edgeset \to (0,\infty )$ whose values are denoted by
$\rest (u,v)$ when $[u,v] \in \edgeset $.  Since the edges are undirected, $\rest (u,v) = \rest (v,u)$. 
Edge weights are commonly identified with electrical network resistance \cite{Doyle} or \cite{Lyons}.  
Edge conductance is $\cond (u,v) = 1/\rest (u,v) $ if $\rest(u,v) > 0$, and $0$ otherwise.  

A finite path $\gamma $ in $\graph $ connecting vertices $u$ and $v$ 
is a finite sequence of vertices
$u=v_0, v_1,\dots ,v_K = v$ such that $[v_k,v_{k+1}] \in \edgeset $ for $k = 0,\dots ,K-1$. 
$\graph $ is connected if there is a finite path from $u$ to $v$ for all $u,v \in \vertexset$.  
The edge weights provide a (geodesic) metric on $\graph $, defined by   
\begin{equation} \label{metricdef}
d_{\rest} (u,v) = \inf_{\gamma } \sum_k \rest (v_k, v_{k+1}), 
\end{equation}
the infimum taken over all finite paths  $\gamma $ joining $u$ and $v$.
This metric space has an extension to a complete metric space \cite[p. 147]{Royden},
which will be denoted $\graphbar _{\rest}  $, or simply $\graphbar $ if the choice of $\rest $ is clear.

Extending the combinatorial notion of path, a path in $\graphbar $
will be a sequence $\{ v_k \}$ with $v_k \in \vertexset $, $[v_k,v_{k+1}] \in \edgeset$,
where the index set may be finite (finite path), the positive integers 
(a ray), or the integers (a double ray).  The role of continuous paths in $\graphbar $  
is played by paths going from $u \in \graphbar $ to $v \in \graphbar $, 
which in the double ray case requires
$\lim _{k \to -\infty} d_{\rest}(v_k,u) = 0$ and $\lim _{k \to \infty} d_{\rest}(v_k, v) = 0$. 
The ray case is similar.
A path for which all vertices are distinct is a simple path.     
If $\graph $ is connected then there is a path joining any pair of points $u,v \in \graphbar $.

\subsection{Operators $\Delta $ on $\graph $}

With respect to a vertex weight $\wt $ the $l^p$ norms of functions $f:\vertexset \to \real $ are 
\[\| f \| _p = \Bigl ( \sum_{v \in \vertexset} |f(v)|^p \wt(v) \Bigr ) ^{1/p} ,\quad 1 \le p < \infty ,\]
and
\[\| f \| _{\infty } = \sup_{v \in \vertexset} |f(v)|  .\]
Functions $f$ with finite norm comprise the $l^p$ spaces. 
In particular the Hilbert space $l^2$ consists of the functions 
$f:\vertexset \to \real $ with $\sum_{v \in \vertexset} |f(v)|^2 \wt (v) < \infty $, with 
the inner product $\langle f,g \rangle  = \sum_{v \in \vertexset} f(v) g(v) \wt (v)$.  

The Banach space $l^{\infty}$ is an algebra with pointwise addition and multiplication.
Two subalgebras of $l^{\infty}$ are well matched to the goal of myopic modeling when $\graph $ is infinite and connected.
The first algebra $\alg \subset l^{\infty} $ is defined as the algebra of functions $f :\vertexset \to \real $
such that the set of edges $[u,v]$ in $\edgeset $ with $f(u) \not= f(v)$ is finite.   
If $\graph $ is infinite, $\alg$ will not be closed in $l^{\infty}$.  Let $\Balg $ denote the closure of $\alg$ in $l^{\infty }$.

Formal operators $\Delta $, analogous to the Laplace operator of a Riemannian manifold, are defined by  
\begin{equation} \label{Lapop}
\Delta f(v) = \frac{1}{\wt (v)}\sum_{u \sim v} C(u,v) (f(v) - f(u)).
\end{equation}        
The vector space $\domain _K$ of functions $f:\vertexset \to \real $ which are $0$ 
at all but finitely many vertices provides an initial domain for $\Delta $.
If the vertex weights satisfy $\sum_{v \in \vertexset} \wt (v) < \infty $, then all
bounded functions will lie in $l^p$ for $1 \le p < \infty $. 
If $\sum_{v \in \vertexset} \wt (v) = \infty $ there may  be square integrable elements of $\alg $;
define $\domain _{\alg } = \alg  \cap l^2$.
The next proposition collects basic facts about the symmetric bilinear forms 
induced by the edge conductances $C(u,v)$.
Closely related results using the smaller domain $\domain _K$
are in \cite[p. 20]{DaviesHKST} and \cite{KL2}.

\begin{prop} \label{bform}
 For $f \in \domain _{\alg } $, define the operator
\begin{equation} \label{Laplace}
\Delta f(v) = \frac{1}{\wt (v)}\sum_{u \sim v} C(u,v) (f(v) - f(u)).
\end{equation}
The symmetric bilinear form
\begin{equation} \label{bformdef}
B(f,g) = \frac{1}{2}\sum_{v \in \vertexset} \sum_{u \sim v} C(u,v)(f(v) - f(u))(g(v) - g(u)),
\quad f,g \in \domain _{\alg}, 
\end{equation}
has a nonnegative quadratic form $B(f,f)$, and satisfies
\begin{equation} \label{symmetry}
B(f,g) = \langle \Delta f,g \rangle  = \langle f, \Delta g \rangle , \quad f,g \in \domain _{\alg}.
\end{equation}

\end{prop}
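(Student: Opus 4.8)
The plan is to reduce every assertion to a manipulation of \emph{finite} sums, using the defining property of $\alg$, and then to obtain \eqref{symmetry} by a discrete summation-by-parts. First I would record the key finiteness observation: if $f \in \alg$ then $f(v) - f(u) = 0$ for all but finitely many edges $[u,v]$, so $\Delta f(v)$ can be nonzero only at the finitely many endpoints of those edges. Hence $\Delta f \in \domain _K$, and in particular $\Delta f \in l^2$, so that the pairings $\langle \Delta f, g \rangle$ and $\langle f, \Delta g \rangle$ are genuine finite sums for $f,g \in \domain _{\alg}$. The same observation shows that in the double sum \eqref{bformdef} the summand vanishes except across finitely many edges, so $B(f,g)$ is a finite sum as well; this is what makes all subsequent rearrangements legitimate.

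Granting finiteness, the nonnegativity of the quadratic form is immediate: taking $g = f$ in \eqref{bformdef} gives $B(f,f) = \frac{1}{2}\sum_{v}\sum_{u \sim v} C(u,v)(f(v)-f(u))^2$, and since every conductance $C(u,v)$ is positive, each term is nonnegative.

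For the identity \eqref{symmetry} I would expand $\langle \Delta f, g \rangle = \sum_{v} \Delta f(v)\, g(v)\, \wt(v)$; the weight $\wt(v)$ cancels the factor $1/\wt(v)$ in \eqref{Laplace}, leaving $\langle \Delta f, g \rangle = \sum_{v}\sum_{u \sim v} C(u,v)(f(v)-f(u)) g(v)$, which I regard as a sum over ordered pairs $(v,u)$ of adjacent vertices. Because $\sim$ is symmetric and $C(u,v) = C(v,u)$, interchanging the roles of $u$ and $v$ among the summation variables rewrites this same quantity as $-\sum_{v}\sum_{u \sim v} C(u,v)(f(v)-f(u)) g(u)$. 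Averaging the two expressions produces the symmetric combination $\frac{1}{2}\sum_{v}\sum_{u \sim v} C(u,v)(f(v)-f(u))(g(v)-g(u)) = B(f,g)$, giving $\langle \Delta f, g \rangle = B(f,g)$. Since $B(f,g)$ is manifestly invariant under the exchange $f \leftrightarrow g$, the same computation applied to $\langle \Delta g, f \rangle$ yields $\langle f, \Delta g \rangle = B(g,f) = B(f,g)$, which completes \eqref{symmetry}.

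There is no substantial obstacle here once the correct function class is fixed; the only point requiring care is the interchange of summation order together with the relabeling $u \leftrightarrow v$, which for general $l^2$ data would demand an absolute-convergence argument. The restriction to $\domain _{\alg} = \alg \cap l^2$ is precisely what removes that difficulty, since it collapses each of the relevant sums to finitely many nonzero terms and thereby makes the formal summation-by-parts rigorous without further estimates.
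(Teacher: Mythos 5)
Your proof is correct and follows essentially the same route as the paper: both arguments rest on the observation that for $f\in\alg$ only finitely many edges carry a nonzero difference $f(v)-f(u)$, and both establish \eqref{symmetry} by the same edge-reindexing (summation-by-parts) manipulation, the only cosmetic difference being that you start from $\langle \Delta f,g\rangle$ and arrive at $B(f,g)$ while the paper expands $2B(f,g)$ and regroups it into $2\langle \Delta f,g\rangle$. Your explicit remark that $\Delta f$ is finitely supported, so all pairings are finite sums, is a slightly more careful justification of the rearrangements than the paper gives.
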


\begin{proof}

The nonnegativity of the quadratic form is immediate from the definition.
Note that for any $f \in \alg $ there are only finitely many vertices $v \in \vertexset $ 
for which $f(v) - f(u)$ is ever nonzero if $u$ is adjacent to $v$.

To relate the operator $\Delta $ and the form $B$, start with
\begin{equation} \label{compform}
2B(f,g) 
 = \sum_{v \in \vertexset} g(v) \sum_{u \sim v} C(u,v) \Bigl ( f(v) - f(u) \Bigr )
\end{equation}
\[ - \sum_{v \in \vertexset} \Bigl ( \sum_{u \sim v} C(u,v) g(u)(f(v) - f(u)) \Bigr )\]
Suppose a graph edge $e$ has vertices $v_1(e)$ and $v_2(e)$.  
The second sum over $v \in \vertexset$ in \eqref{compform} can be viewed as a 
sum over edges, with each edge contributing the terms
$C(v_1,v_2)g(v_1)(f(v_2) - f(v_1))$ and $C(v_1,v_2)g(v_2)(f(v_1) - f(v_2))$.
Using this observation to change the order of summation gives
\[ \sum_{v \in \vertexset} \Bigl ( \sum_{u \sim v} C(u,v) g(u)(f(v) - f(u)) \Bigr )\]
\[ = \sum_{e \in \edgeset} C(v_1(e),v_2(e))
\Bigl (g(v_1)(f(v_2) - f(v_1)) + g(v_2)(f(v_1) - f(v_2)) \Bigr )\]
\[ = \sum_{u \in \vertexset} g(u) \sum_{v \sim u} C(u,v) (f(v) - f(u))  \]
Employing this identity in \eqref{compform} gives
\begin{equation} \label{formform}
2B(f,g) 
= 2 \sum_v \wt (v) g(v) [\frac{1}{\wt (v)}\sum_{u \sim v} C(u,v) (f(v) - f(u))].
\end{equation}

\end{proof}

\subsubsection{Relation to Markov chains}

A continuous time Markov chain uses a system of constant coefficient differential equations 
\begin{equation} \label{kolbeq}
\frac{dP}{dt} = QP, \quad P(0) = I.
\end{equation}
to describe the evolution of probability densities $X(t) = X(0)P(t)$ 
on a countable set of states. 
An associated graph (generally directed) may be constructed by connecting states (vertices) $i$ and $j$
with an edge if $Q_{ij} \not= 0$.

With respect to the standard basis consisting of functions $1_w: \vertexset \to \real $
with $1_w(w) = 1$ and $1_w(v) = 0$ for $v \not= w$, 
the operators $\Delta $ have the matrix representation 
\[Q(v,w) = \Bigl \{ \begin{matrix} \wt ^{-1}(w) \sum_{u \sim w} C(u,w), & v = w \cr 
-\wt ^{-1}(v) C(v,w), & v \sim w \cr 
0, & {\rm otherwise } \end{matrix} \Bigr \},
\quad v,w \in \vertexset . \]
If $v$ is fixed, then summing on $w$ gives
\[\sum_w Q(v,w) = \frac{1}{\wt (v)} \sum_{u \sim v} C(u,v) 
- \frac{1}{\wt (v)} \sum_{u \sim v} C(u,v) = 0,\]
so $-Q(v,w)$ is a $Q$-matrix in the sense of Markov chains \cite[p. 58]{Liggett}.
In the $Q$ - matrix formulation the matrix entries represent transition rates.

In the finite state case the solution of \eqref{kolbeq} is simply $P(t) = e^{Qt}$.
When the set of states is infinite the formal description of the operator $Q$ 
may be inadequate to determine the desired semigroup $e^{Qt}$, an issue known 
in probability as the problem of explosions.

\subsubsection{Bounded operators}

Using the usual Banach space operator norms, 
\[ \| \Delta  \|_{p} = \sup_{\| f \| _p \le 1} \| \Delta f \| _p, \quad 1 \le p \le \infty ,\] 
the next proposition characterizes the operators $\Delta $ which are bounded
on all the $l^p$ spaces, $1 \le p \le \infty $.  This situation offers considerable technical advantages for developing
the properties of the semigroup $\exp (-t\Delta )$.   A similar result is in \cite{Davies93}.

\begin{prop}
The norms for the operator $\Delta $ satisfy
\[ \| \Delta  \|_{\infty} = \sup_{v \in \vertexset } \frac{2}{\wt (v)} \sum_{u \sim v} C(u,v) \]
and 
 \[ \| \Delta  \| _1 \le   \sup_v \frac{2}{\wt (v)}\sum_{u \sim v} C(u,v).\]
Consequently, the condition 
\begin{equation} \label{lpbnd}
 \sup_{v \in \vertexset } \frac{1}{\wt (v)} \sum_{u \sim v} C(u,v) < \infty 
 \end{equation}
is equivalent to $\Delta $ being bounded on all the $l^p$ spaces, $1 \le p \le \infty $. 
\end{prop}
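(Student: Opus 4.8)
The plan is to establish the two endpoint norm estimates separately and then obtain the full $l^p$ statement by interpolation. First I would compute $\|\Delta\|_\infty$. For the upper bound, fix $f$ with $\|f\|_\infty \le 1$. Since $|f(v) - f(u)| \le 2$ and the conductances are nonnegative, the triangle inequality applied to \eqref{Lapop} gives $|\Delta f(v)| \le \frac{1}{\wt(v)}\sum_{u\sim v} C(u,v)|f(v)-f(u)| \le \frac{2}{\wt(v)}\sum_{u\sim v} C(u,v)$, and taking the supremum over $v$ shows $\|\Delta\|_\infty \le \sup_v \frac{2}{\wt(v)}\sum_{u\sim v} C(u,v)$. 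For the reverse inequality I would exhibit, for each fixed $v_0$, the test function $f$ defined by $f(v_0) = 1$, $f(u) = -1$ for every neighbor $u$ of $v_0$, and $f \equiv 0$ elsewhere. Local finiteness guarantees $v_0$ has finitely many neighbors, so $f$ changes across only finitely many edges and hence lies in $\alg$ with $\|f\|_\infty = 1$; moreover $f(v_0) - f(u) = 2$ for each $u \sim v_0$, so $\Delta f(v_0) = \frac{2}{\wt(v_0)}\sum_{u\sim v_0} C(u,v_0)$. Thus $\|\Delta\|_\infty \ge \frac{2}{\wt(v_0)}\sum_{u\sim v_0} C(u,v_0)$ for every $v_0$, and taking the supremum yields the claimed equality.

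Next I would bound $\|\Delta\|_1$. Working first with finitely supported $f$ (where every sum is finite and rearrangement is unrestricted), I write $\|\Delta f\|_1 = \sum_v \wt(v)|\Delta f(v)| \le \sum_v \sum_{u\sim v} C(u,v)|f(v)-f(u)|$, the weight $\wt(v)$ having cancelled against the $1/\wt(v)$ in \eqref{Lapop}. Estimating $|f(v)-f(u)| \le |f(v)| + |f(u)|$ splits this into two double sums. The first, $\sum_v |f(v)|\sum_{u\sim v} C(u,v)$, is at most $\bigl(\sup_v \frac{1}{\wt(v)}\sum_{u\sim v} C(u,v)\bigr)\sum_v |f(v)|\wt(v)$. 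For the second I would interchange the order of summation exactly as in the edge-reindexing step of \propref{bform}, using $C(u,v) = C(v,u)$, to rewrite $\sum_v\sum_{u\sim v} C(u,v)|f(u)| = \sum_u |f(u)|\sum_{v\sim u}C(u,v)$, which is bounded by the same quantity. Adding the two bounds gives $\|\Delta f\|_1 \le \sup_v \frac{2}{\wt(v)}\sum_{u\sim v}C(u,v)\,\|f\|_1$, and density of the finitely supported functions extends the estimate to all of $l^1$.

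Finally, writing $K = \sup_v \frac{1}{\wt(v)}\sum_{u\sim v}C(u,v)$, the two computations show $\|\Delta\|_\infty = 2K$ and $\|\Delta\|_1 \le 2K$. If \eqref{lpbnd} holds, i.e. $K < \infty$, then $\Delta$ is bounded on both $l^1$ and $l^\infty$ over the measure space $(\vertexset,\wt)$, so the Riesz--Thorin interpolation theorem gives boundedness on every intermediate $l^p$ with $\|\Delta\|_p \le 2K$. Conversely, if $\Delta$ is bounded on all $l^p$, then in particular $\|\Delta\|_\infty = 2K < \infty$, forcing $K < \infty$; this establishes the equivalence. I expect the main obstacle to be the lower bound for $\|\Delta\|_\infty$: one must produce an admissible extremal test function and confirm it belongs to the allowed class, which is precisely where local finiteness is used, and the symmetrization in the $l^1$ estimate must be carried out carefully to avoid double counting. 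Once both endpoint norms are in hand, the passage to general $p$ is routine interpolation.
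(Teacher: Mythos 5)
Your proposal is correct and follows essentially the same route as the paper: the same extremal test function ($f=1$ at a vertex, $-1$ at its neighbors) for the $l^\infty$ lower bound, the same splitting $|f(v)-f(u)|\le |f(v)|+|f(u)|$ with a symmetry-based reindexing for the $l^1$ estimate, and Riesz--Thorin interpolation for intermediate $p$. Your write-up is in fact slightly more careful than the paper's at the $l^1$ step, but there is no substantive difference in method.
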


\begin{proof}
Consider $ \| \Delta  \|_{\infty}$ first.
 If $|f(u)| \le 1$ for all $u \in \vertexset $, then
\[ |\Delta f(v)| \le \frac{2}{\wt (v)}\sum_{u \sim v} C(u,v) ,\]
while if $f(v) = 1$ and $f(u) = -1$ for all $u \sim v$ then
\[ |\Delta f(v)| = \frac{2}{\wt (v)} \sum_{u \sim v} C(u,v) .\]

Turning to $ \| \Delta  \|_{1}$, suppose $\| f \| _1 < \infty $.  Then  
\[  \| \Delta  f \| _1= \sum_v  \wt (v) |\frac{1}{\wt (v)}\sum_{u \sim v} C(u,v) (f(v) - f(u))|  \]
\[ \le \sum_{[u,v] \in \edgeset}  2C(u,v)(|f(v)| + |f(u)|) 
   \le 2\sum_v |f(v)| \bigl (\sum_{u \sim v} C(u,v) \bigr ) \]
\[  \le 2\sum_v \wt (v)|f(v)| \frac{1}{\wt (v)} \bigl (\sum_{u \sim v} C(u,v) \bigr ) 
 \le \sup_v \bigl (\frac{2}{\wt (v)}\sum_{u \sim v} C(u,v) \bigr ) \| f \| _1 .\]

Bounds for the remaining $l^p$ spaces follow from an elementary case of the Reisz-Thorin Interpolation Theorem \cite[p. 3]{DaviesHKST}. 

\end{proof}

\subsection{The semigroup $\sg (t) = \exp(-t \Delta ) $} 

\subsubsection{Basic properties}

If  \eqref{lpbnd} is satisfied, then the $l^p$ semigroup of bounded operator-valued functions $\exp(-t \Delta ) $ 
can be defined \cite[pp. 1-3]{Pazy} by a power series convergent in the operator norm,
\[\sg (t) = \exp(-t\Delta ) = \sum_{n=0}^{\infty} (-t\Delta )^n/n! , \quad t \ge 0.\]
Various well known 'heat equation' properties of the $l^p$ semigroups $\exp(-t \Delta ) $ are collected in the next proposition.
Additional information can be found in \cite[p. 13-16]{DaviesHKST}.

\begin{prop} \label{heatprop}
Assume that \eqref{lpbnd} is satisfied.   For $1 \le p \le \infty $,
the $l^p$ semigroup  $\sg (t) = \exp(-t \Delta ) $ is positivity preserving.
If $\graph $ is connected, $f \ge 0$, and $f(v) > 0$ for some $v \in \vertexset $,
then $S(t)f(w) > 0$ for all $t > 0$ and $w \in \vertexset $. 
The semigroup $\exp (-t\Delta )$ is a contraction on $l^p$ for $1 \le p \le \infty $,
and preserves the $l^1$ norm of nonnegative functions.
\end{prop}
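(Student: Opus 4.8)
The plan is to work throughout with the matrix representation $Q$ of $\Delta$ recorded above, exploiting three features: $-Q$ has nonnegative off-diagonal entries, its row sums vanish, and $\wt(v)Q(v,w)$ is symmetric in $v,w$ by the identity \eqref{symmetry}. First I would establish positivity preservation. Let $c = \sup_v \wt(v)^{-1}\sum_{u\sim v}C(u,v)$, which is finite by \eqref{lpbnd}, and set $B = cI - Q$. Reading off the entries of $Q$, the matrix $B$ has diagonal entries $c - \wt(v)^{-1}\sum_{u\sim v}C(u,v) \ge 0$ and off-diagonal entries $\wt(v)^{-1}C(v,w) \ge 0$ when $v\sim w$ (and $0$ otherwise), so $B$ is entrywise nonnegative and bounded. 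Since $cI$ commutes with $B$, we have $\sg(t) = \exp(-tQ) = e^{-ct}\exp(tB)$, and the power series $\exp(tB)=\sum_n t^n B^n/n!$ has only nonnegative entries for $t\ge 0$. Hence $\sg(t)$ maps nonnegative functions to nonnegative functions on every $l^p$.

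For the strict positivity I would use connectedness together with the combinatorial structure of the positive entries of $B$: the off-diagonal entry $B(w,v)$ is positive exactly when $w\sim v$. Given a target vertex $w$ and a vertex $v_0$ with $f(v_0)>0$, connectedness supplies a finite path from $v_0$ to $w$ of some length $k$, and the corresponding product of positive factors shows $(B^k)(w,v_0)>0$. Because all terms of $\exp(tB)$ are nonnegative, the $t^k$ term alone forces the $(w,v_0)$ entry of $\exp(tB)$, and therefore of $\sg(t)$, to be positive for every $t>0$; then $\sg(t)f(w)\ge \sg(t)(w,v_0)f(v_0)>0$.

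The contraction on $l^\infty$ follows from $\Delta\mathbf 1 = 0$ (the row sums of $Q$ vanish), which gives $\sg(t)\mathbf 1=\mathbf 1$; sandwiching $-\|f\|_\infty\mathbf 1\le f\le \|f\|_\infty\mathbf 1$ and applying positivity yields $\|\sg(t)f\|_\infty\le \|f\|_\infty$. For the $l^1$ statements I would transfer the symmetry of \eqref{symmetry}, equivalently $\wt(v)Q(v,w)=\wt(w)Q(w,v)$, to the exponential: conjugating by $D^{1/2}=\diag(\wt^{1/2})$ makes $D^{1/2}QD^{-1/2}$ symmetric, hence so is $D^{1/2}\sg(t)D^{-1/2}=\exp(-tD^{1/2}QD^{-1/2})$, which unwinds to $\wt(w)\sg(t)(w,v)=\wt(v)\sg(t)(v,w)$. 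Combined with $\sum_w\sg(t)(v,w)=1$ this gives the weighted column-sum identity $\sum_w\wt(w)\sg(t)(w,v)=\wt(v)$. For $f\ge 0$, interchanging the (nonnegative, hence Tonelli-justified) summations then yields $\|\sg(t)f\|_1=\sum_v f(v)\wt(v)=\|f\|_1$, and the triangle inequality gives contraction for general $f\in l^1$. Contraction on the intermediate $l^p$ then follows by Riesz--Thorin interpolation, as already invoked in the excerpt.

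I expect the main obstacle to be the $l^1$ norm preservation: the decisive point is the detailed-balance symmetry $\wt(v)Q(v,w)=\wt(w)Q(w,v)$ and its propagation to $\sg(t)$, after which the weighted row and column sums can be exchanged. Care is needed to justify transferring symmetry through the operator exponential and interchanging the infinite sums, but the nonnegativity of all terms (Tonelli) together with the operator-norm convergence of the defining series under \eqref{lpbnd} make these steps legitimate.
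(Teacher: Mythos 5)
Your argument is correct, and while the overall skeleton (positivity, then $l^\infty$ and $l^1$ contraction, then Riesz--Thorin) matches the paper, two of the components are carried out by genuinely different means. For positivity the paper splits $\Delta = D - N$ into its diagonal and off-diagonal parts and iterates the Duhamel formula $\sg(t)f = \exp(-tD)f + \int_0^t \exp((s-t)D)N\sg(s)f\,ds$; your uniformization $\sg(t) = e^{-ct}\exp(t(cI-Q))$ with $cI - Q$ entrywise nonnegative is a close cousin but gives the strict positivity statement more transparently, since a single path of length $k$ already forces $(B^k)(w,v_0)>0$ and hence positivity of the $t^k$ term. The real divergence is in the $l^1$ norm preservation: the paper deduces it from $l^2$ self-adjointness by pairing $\int_0^t\sg(\tau)f\,d\tau$ against characteristic functions $1_{\Omega(n)}$ of an exhausting sequence of finite sets and showing the boundary term $\langle\cdot,\Delta 1_{\Omega(n)}\rangle$ vanishes in the limit, whereas you use the detailed-balance symmetry $\wt(v)Q(v,w)=\wt(w)Q(w,v)$ of the kernel together with the row-sum identity and Tonelli. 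Your route is more elementary (no Hilbert-space input at all) and also yields the explicit column-sum identity $\sum_w\wt(w)\sg(t)(w,v)=\wt(v)$, which the paper only obtains later in the heat-kernel subsection. Two points you flag do need to be made honest: the conjugation by $\diag(\wt^{\pm 1/2})$ should be read as the entrywise statement $\wt(v)(Q^n)(v,w)=\wt(w)(Q^n)(w,v)$, proved by induction using the local finiteness of each row (the unbounded diagonal operators themselves need never be exponentiated); and the identity $\sum_w\sg(t)(v,w)=1$ is best obtained not from $\sg(t)\mathbf 1=\mathbf 1$ directly but from $\sum_w(B^n)(v,w)=c^n$ and Tonelli, since representing the action of $\sg(t)$ on $\mathbf 1\in l^\infty$ by an entrywise sum is exactly what is being proved. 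With those repairs the proof is complete.
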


\begin{proof}
Using an argument from \cite[pp. 68-71]{Liggett}, split $\Delta $ as the difference of the diagonal and off-diagonal parts $\Delta  = D  - N$,
where these two bounded operators are
\[Df(v) =  \frac{1}{\wt (v)} [\sum_{u\sim v} C(u,v)] f(v), \quad  Nf(v) =  \frac{1}{\wt (v)} \sum_{u\sim v} C(u,v) f(u).\]
The differential equation 
\[\frac{d}{dt} \sg (t) + \Delta \sg(t) = 0 \]
for this semigroup may be written as  
\[\frac{d}{dt} \sg (t) + D\sg (t) = N\sg (t), \quad {\rm or} \quad 
\frac{d}{dt} \exp(tD)\sg (t)  = \exp(tD) N\sg (t),\]
and integration gives the following representation for $\sg (t) f$,
\begin{equation} \label{inteq}
\sg (t)f  = \exp(-tD)f + \int_0^t  \exp((s-t)D) N\sg (s) f \ ds .
\end{equation}
The operators $N$ and $\exp (-tD)$ both preserve nonnegative functions, so solving \eqref{inteq} by iteration
shows that $\sg (t)$ is positivity preserving. 

Introduce the normalized functions $\delta _v: \vertexset \to \real $ with 
$\delta _v(v) = 1/\wt (v)$ and $\delta _v(w) = 0$ for $v \not= w$.  For $u \sim v$, the formula for $N$ gives
\[ N\delta _v (u) = \frac{1}{\wt (u)} C(v,u) \delta _v(v)  > 0 .\]
It then follows from \eqref{inteq} that if $\graph $ is connected $f \ge 0$, and $f(v) > 0$ for some $v \in \vertexset $,
then 
\begin{equation} \label{strictpos}
S(t)f(w) > 0, \quad t > 0, \quad  {\rm for \ all \ } w \in \vertexset . 
\end{equation}

The operator $\Delta $ annihilates the constants, so $\exp (-t\Delta ) 1 = 1$.
If $f \in l^{\infty }$ is nonnegative then for all $v \in \vertexset $
\[0 \le \exp(-t\Delta ) f (v) = \exp(-t\Delta ) \| f \|_{\infty }(v) + \exp(-t\Delta ) (f - \| f \|_{\infty })(v) \]
\[ =  \| f \|_{\infty } + \exp(-t\Delta ) (f - \| f \|_{\infty })(v) .\]
The last term is nonpositive, so $\exp (-t\Delta )$ reduces the $l^{\infty}$ norm of nonnegative functions.
If $g \in l^{\infty}$ is written as the difference of two nonnegative functions, 
\[g = g^+ - g^-, \quad g^+(v) = \Bigl \{ \begin{matrix} g(v), & g(v) > 0, \cr
0,& g(v) \le 0, \end{matrix} \Bigr \} \]
then
\[\exp(-t\Delta ) g = \exp(-t\Delta ) g^+ - \exp(-t\Delta ) g^-, \]
so $\exp (-t\Delta )$ is a contraction on $l^{\infty} $.
 
 Using \propref{bform}, the simple  calculation  
\begin{equation} \label{contract2}
\frac{d}{dt} \langle \sg (t)f,\sg (t)f \rangle  = - 2\langle \Delta \sg (t)f, \sg (t)f \rangle 
 = -2 B(\sg (t) f,\sg (t) f ) \le 0
 \end{equation} 
shows that $\sg (t) = \exp (-t \Delta )$ acts by contractions on $l^2$.

Suppose $\Omega \subset \vertexset $ is finite, and $1_{\Omega }$ denotes the characteristic function of $\Omega $. 
Let $\Omega (n)$ be a sequence of finite sets with $\Omega (n) \subset \Omega (n+1)$ and $\bigcup _n \Omega (n) = \vertexset $.
Since $\Delta $ is bounded on $l^2$,  the symmetry \eqref{symmetry} extends to self-adjointness.
If $f \in l^1$, then  $f \in l^2$, with 
\[ \sum_{v \in \vertexset} [\sg (t) - I] f(v) \wt (v) =  \lim_{n \to \infty} \sum_{v \in \Omega (n) } [\sg (t) - I] f(v) \wt (v) \]
\[ =  \lim_{n \to \infty} \langle \int_0^t \frac{d}{d\tau } \sg (\tau ) f(v) \ d\tau ,1_{\Omega (n)} \rangle  
=  - \lim_{n \to \infty} \langle  \int_0^t  \sg (\tau ) f(v) \ d\tau , \Delta 1_{\Omega (n)} \rangle .\] 
Since $\Delta $ is bounded on $l^{\infty} $,  the function $\Delta  1_{\Omega (n)}$ is uniformly bounded independent of $n$.
Moreover $\Delta  1_{\Omega (n)}(v) = 0$ unless $v$ is adjacent to a vertex $u$ such that $1_{\Omega } (u) \not=  1_{\Omega } (v)$.
Since  $\int_0^t  \sg (\tau ) f(v) \ d\tau \in l^1$, we find
\[ \lim_{n \to \infty} \langle  \int_0^t  \sg (\tau ) f(v) \ d\tau , \Delta 1_{\Omega (n)} \rangle = 0,\]
and 
\[\sum_v \sg(t) f(v) \wt (v) =  \sum_v f(v)\wt (v).\]  
In particular $\sg (t)$ preserves the $l^1(\graph )$ norm of nonnegative functions.
If $g \in l^{1}$ is written as the difference of two nonnegative functions as above, $g = g^+ - g^-$, then
\[\sum_v |\exp(-t\Delta ) g(v) \wt (v)|  = \sum _v | [\exp(-t\Delta ) g^+ - \exp(-t\Delta ) g^-](v) \wt (v)|\]
\[\le \sum _v[ \exp(-t\Delta ) g^+ (v) + \exp(-t\Delta ) g^-(v)] \wt (v) = \| \exp (-t\Delta )|g| \| _1= \| g \| _1, \]
so $\exp (-t\Delta )$ is a contraction on $l^{1} $.

Since $\exp (-t\Delta )$ is  a contraction on $l^1$ and $l^{\infty}$, the Reisz-Thorin Interpolation Theorem \cite[p. 3]{DaviesHKST}
establishes that $\exp (-t\Delta )$ is a contraction on $l^p$ for $1 \le p \le \infty $.

\end{proof}

\subsubsection{The heat kernel}

The diffusion or heat kernel $\sg (t,u,v)$ is obtained when the functional $\sg (t)f(u)$ is represented as an inner product,
\begin{equation} \label{heatker2}
\sg (t)f(u) = \langle \sg (t,u,\cdot), f \rangle  = \sum_w \sg (t,u,w) f(w) \wt (w) .
\end{equation}
Using the $l^1$ normalized functions $\delta _v$ defined above, 
\begin{equation} \label{heatkern}
\sg (t)\delta _v(u) = \sum_w \sg (t,u,w) \delta _v(w) \wt (w) = \sg (t,u,v)  .
\end{equation}
Since $\sg (t)$ is positivity preserving, $\sg (t,u,v) \ge 0$, with strict inequality if $\graph $ is connected by \eqref{strictpos}.
Using \propref{heatprop},
\[1 = \| S(0) \delta _v \| _1 =  \| S(t) \delta _v \| _1\]
\[ = \sum_u \Bigl ( \sum_w \sg (t,u,w) \delta _v(w) \wt (w) \Bigr ) \wt (u)  
=  \sum_u \sg (t,u,v)  \wt (u).\]

Since $\Delta $ is local, explicit estimates for $\sg (t,u,v)$ are readily obtained  \cite{Dodziuk}.
Let $d_c(u,v)$ be the smallest number of edges in a path from $u$ to $v$.
If $d_c(u,v) = k$, then $\Delta ^j\delta _v(u) = 0$ for $j < k$, so 
\[\sg (t,u,v) = \sg (t)\delta _v(u) = \sum_{n=k}^{\infty} \frac{(-t\Delta )^n}{n!} \delta _v(u) , \quad d_c(u,v) = k,\]
and so for $d_c(u,v) = k$,
\[|\sg (t,u,v)| \le \frac{1}{\wt (v) }\sum_{n=k}^{\infty} \frac{t^n\| \Delta  \| _{\infty}^n}{n!} 
 \le  \frac{1}{\wt (v) }\frac{t^k \| \Delta  \|_{\infty}^k}{k!} \sum_{n \ge k} \frac{k!t^{n-k}\| \Delta  \|_{\infty} ^{n-k}}{n!}.\]
For $0 \le k \le n$, the combination of $\frac{k!}{n!} \le \frac{1}{(n-k)!}$ with the Stirling estimate $k! \ge k^ke^{1-k}$
leads to the following rapid spatial decay result.  

\begin{prop}
Assume that \eqref{lpbnd} holds.
Then for $d_c(u,v) = k$,
\[|\sg (t,u,v)| \le \frac{1}{\wt (v)} \frac{t^k \| \Delta \|_{\infty}^k}{k!} \exp(t \| \Delta  \|_{\infty} ) 
 \le \frac{e}{\wt (v)} \Bigl [\frac{et \| \Delta  \|_{\infty}}{ k}\Bigr ]^k \exp(t \| \Delta  \|_{\infty} ) .\]
\end{prop}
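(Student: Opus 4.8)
The plan is to exploit the locality of $\Delta $ to truncate the exponential power series for $\sg (t)\delta _v$ at the term of order $k = d_c(u,v)$, and then to estimate the resulting tail by crude operator-norm bounds. The starting point is the pointwise series representation $\sg (t)\delta _v(u) = \sum_{n=0}^{\infty} (-t)^n \Delta ^n\delta _v(u)/n!$, which is legitimate because condition \eqref{lpbnd} makes $\Delta $ bounded on $l^{\infty}$, so the series converges in operator norm and may be evaluated at $u$ term by term. All the structural content is then contained in showing that the low-order terms vanish at $u$.

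The one step requiring an argument rather than a calculation is the support-propagation claim: if $g$ vanishes outside the combinatorial ball $\{ w : d_c(w,v) \le m \}$, then $\Delta g$ vanishes outside $\{ w : d_c(w,v) \le m+1 \}$. This follows directly from \eqref{Lapop}, since $\Delta g(w)$ depends only on $g(w)$ and on the values $g(u)$ at vertices $u \sim w$; if $d_c(w,v) > m+1$ then $w$ and all its neighbors lie outside the radius-$m$ ball, so $\Delta g(w) = 0$. Starting from $\delta _v$, which is supported at $v$, induction on $n$ gives that $\Delta ^n\delta _v$ is supported in $\{ w : d_c(w,v) \le n \}$. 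Consequently $\Delta ^n\delta _v(u) = 0$ whenever $n < d_c(u,v) = k$, so the series for $\sg (t)\delta _v(u)$ begins at $n = k$, exactly as recorded in the paragraph preceding the statement.

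With the series truncated, I would bound each surviving term by $|\Delta ^n\delta _v(u)| \le \| \Delta ^n\delta _v \| _{\infty} \le \| \Delta \| _{\infty}^n \, \| \delta _v \| _{\infty} = \| \Delta \| _{\infty}^n/\wt (v)$, using the norm $\| \Delta \| _{\infty}$ from the preceding proposition together with $\| \delta _v \| _{\infty} = 1/\wt (v)$. Factoring out the leading term $t^k\| \Delta \| _{\infty}^k/(k!\,\wt (v))$ and re-indexing with $m = n-k$, the residual sum is $\sum_{n \ge k} \tfrac{k!}{n!}\,(t\| \Delta \| _{\infty})^{n-k}$; the elementary inequality $k!/n! \le 1/(n-k)!$ bounds this by $\sum_{m \ge 0} (t\| \Delta \| _{\infty})^m/m! = \exp(t\| \Delta \| _{\infty})$, giving the first stated inequality. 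The second inequality is then purely arithmetic: the Stirling lower bound $k! \ge k^k e^{1-k}$ gives $1/k! \le e^{-1}(e/k)^k$, hence $t^k\| \Delta \| _{\infty}^k/k! \le e^{-1}(et\| \Delta \| _{\infty}/k)^k$, and since $e^{-1} \le e$ this is dominated by the (deliberately loose) constant displayed in the statement.

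I expect no serious obstacle: the estimate is crude by design, and the boundedness and contraction facts it relies on are already in hand. The only point demanding genuine care is the support-propagation induction, and even there the locality of the difference operator makes the argument immediate. The single bookkeeping subtlety is the pointwise, term-by-term evaluation of the operator-norm-convergent series, which I would note explicitly but not belabor.
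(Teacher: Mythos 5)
Your proposal is correct and follows essentially the same route as the paper: truncate the power series for $\sg(t)\delta_v(u)$ at $n = k$ using the locality of $\Delta$ (the paper simply asserts $\Delta^j\delta_v(u)=0$ for $j<k$ where you spell out the support-propagation induction), bound each term by $\|\Delta\|_{\infty}^n/\wt(v)$, use $k!/n! \le 1/(n-k)!$ to sum the tail to $\exp(t\|\Delta\|_{\infty})$, and finish with the Stirling bound $k! \ge k^k e^{1-k}$. The only differences are expository, namely your explicit justification of term-by-term pointwise evaluation and of the support claim.
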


\subsubsection{$\Balg $ is an invariant subspace for $\Delta $}

 Recall that $\alg $ is the algebra  of functions $f:\vertexset \to \real $ such that the set of adjacent vertex pairs $v,w$ with $f(v) \not= f(w)$ is a finite set. 
The Banach space (algebra) $\Balg$ is the closure of $\alg $ in $l^{\infty} $. 
The next result shows that $\Balg $ is an invariant subspace for $\Delta $, and thus for $\exp (-t\Delta )$.
Subsequent results will demonstrate that solutions of \eqref{demodel} in $\Balg $ have regular behavior 'at infinity' in $\graph $. 

\begin{thm} \label{invariance} 
Assume that \eqref{lpbnd} holds.
Then $\Balg $ is an invariant subspace for $\Delta $, and for $t \ge 0$ the operator-valued function $\sg (t)$ is a uniformly continuous
semigroup on $\Balg$.
\end{thm}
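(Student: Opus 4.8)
The plan is to reduce everything to two facts: that $\Delta$ maps the dense subalgebra $\alg$ into itself, and that $\Delta$ is a bounded operator on $l^{\infty}$ under the standing hypothesis \eqref{lpbnd}. First I would verify the algebraic claim. Fix $f \in \alg$, so that the set of edges $[u,v]$ with $f(u) \neq f(v)$ is finite. From the definition \eqref{Lapop}, $\Delta f(v)$ can be nonzero only when $v$ is an endpoint of one of these finitely many ``active'' edges; hence $\Delta f$ is supported on a finite vertex set, i.e. $\Delta f \in \domain_K$. Moreover $\domain_K \subset \alg$, since a function vanishing off a finite vertex set can change value only across edges incident to that set, and as $\graph$ is locally finite there are only finitely many such edges. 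Therefore $\Delta(\alg) \subset \domain_K \subset \alg$.

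Next I would promote this to invariance of the closure. Because \eqref{lpbnd} holds, the preceding proposition gives $\| \Delta \|_{\infty} = \sup_v \frac{2}{\wt(v)}\sum_{u \sim v} C(u,v) < \infty$, so $\Delta$ is a bounded, hence continuous, operator on $l^{\infty}$. Given $f \in \Balg = \overline{\alg}$, choose $f_n \in \alg$ with $\| f_n - f \|_{\infty} \to 0$. Continuity yields $\Delta f_n \to \Delta f$ in $l^{\infty}$, and each $\Delta f_n$ lies in $\alg \subset \Balg$ by the previous step; since $\Balg$ is closed, $\Delta f \in \Balg$. Thus $\Balg$ is invariant under $\Delta$.

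Finally I would read off the semigroup statement. The restriction $\Delta|_{\Balg}$ is a bounded operator on the Banach space $\Balg$ with norm at most $\| \Delta \|_{\infty}$, and by iterating the invariance just established every power $\Delta^n$ preserves $\Balg$. Consequently the norm-convergent series $\sg(t) = \sum_{n=0}^{\infty}(-t\Delta)^n/n!$ maps $\Balg$ into itself, so $\sg(t)$ restricts to a semigroup on $\Balg$. Since its generator $\Delta|_{\Balg}$ is bounded, standard semigroup theory \cite[pp. 1--3]{Pazy} gives that $t \mapsto \sg(t)|_{\Balg}$ is uniformly continuous in the operator norm. I do not expect a serious obstacle here; the only point requiring care is the finite-support argument of the first step, where one must be explicit that local finiteness is what keeps the set of active edges, and hence the support of $\Delta f$, finite.
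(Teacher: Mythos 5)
Your proposal is correct and follows essentially the same route as the paper: show $\Delta$ maps $\alg$ into itself (via the finite support of $\Delta g$ for $g\in\alg$), use the boundedness of $\Delta$ on $l^{\infty}$ from \eqref{lpbnd} to pass to the closure $\Balg$, and then invoke the standard fact that a bounded generator yields a uniformly continuous semigroup. Your added detail on why $\domain_K\subset\alg$ via local finiteness is a point the paper leaves implicit, but the argument is the same.
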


\begin{proof}

Pick any $\epsilon > 0$.
For any $f \in \Balg $ there is a $g \in \alg $ with $\| f-g \| _{\infty } < \epsilon $.
The function $\Delta g $ is zero at all but a finite set of vertices, so $\Delta g \in \alg $.
Since $\Delta $ is bounded on $l^{\infty}$,
\[ \| \Delta f - \Delta g \| _{\infty  } \le \| \Delta \| _{\infty} \epsilon ,\]
so $\Delta : \Balg \to \Balg $.  Since $\Delta $ is a bounded operator on $\Balg $, it is \cite[p. 2]{Pazy}
the generator of a uniformly continuous semigroup on $\Balg $

\end{proof}

 \section{The $\Balg $ compactification of $\graph$}

This section treats the relationship between the algebras $\alg $ and $\Balg $ and the topology of $\graph $.
Since $\Balg $ is a uniformly closed subalgebra  of $l^{\infty}$ which contains the identity, 
Gelfand's theory of commutative Banach algebras \cite[p. 210--212]{Lax} alerts us to the existence of a 
compactification $\graphbar $ of $\graph $, the maximal ideal space of $\Balg $, on which $\Balg $ acts as a subalgebra of $C(\graphbar )$, 
the continuous real-valued functions on $\graphbar $.   The results of this section will
identify $\graphbar $, while showing that $\Balg$ can be identified with the entire space $C(\graphbar )$.

For a given edge weight function $\rest : \edgeset \to (0,\infty ) $, define the volume of a graph to be the sum of its edge lengths, 
\[vol _{\rest} (\graph ) = \sum_{[u,v] \in \edgeset} \rest (u,v)  .\]
The compactification $\graphbar $ can be realized by equiping $\graph $ with a suitable
new edge weight function $\rho : \edgeset \to (0,\infty ) $.
If this weight function satisfies the condition $ vol _{\rho} (\graph ) < \infty $, then
$\graphbar $ will be the metric space completion of $\graph $ with respect to the new metric induced by $\rho $. 
This compactification is insensitive to the choice of the edge weights as long as the volume is finite.
Some of the following results do not require the finite volume assumption, so $\rho $ is simply assumed
to be a weight function for $\graph $ unless explicitly constrained.

\subsection{Functions}

\begin{lem}
Any $f \in \alg $ is uniformly continuous on the metric space $\vertexset $ with the metric $d_{\rho }$ induced by $\rho $.
\end{lem}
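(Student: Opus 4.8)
The plan is to exploit the defining property of $\alg$: any $f \in \alg$ changes value across only finitely many edges. Writing $E_f \subset \edgeset$ for the finite set of edges $[u,v]$ with $f(u) \neq f(v)$, the key observation is that finiteness of $E_f$ together with positivity of $\rho$ produces a uniform positive lower bound on the $\rho$-length of any edge across which $f$ varies. From this I would deduce that two vertices that are close enough in $d_{\rho}$ must carry the same value of $f$, which yields uniform continuity at once --- in fact something stronger than uniform continuity.

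First I would dispose of the degenerate case $E_f = \emptyset$: since $\graph$ is connected, $f$ is then constant on $\vertexset$, and uniform continuity holds trivially. Otherwise set $\delta = \min\{\rho(e) : e \in E_f\}$, which is strictly positive as the minimum of finitely many positive values.

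The central step is to show that $d_{\rho}(u,v) < \delta$ forces $f(u) = f(v)$. By definition $d_{\rho}(u,v)$ is the infimum of $\rho$-lengths over finite paths joining $u$ and $v$ (cf.\ \eqref{metricdef}), so there is a finite path $\gamma : u = v_0, v_1, \ldots, v_K = v$ whose total $\rho$-length $\sum_k \rho(v_k, v_{k+1})$ is strictly less than $\delta$. No edge of this path can lie in $E_f$, since a single such edge would already contribute at least $\delta$ to the sum; hence $f(v_k) = f(v_{k+1})$ for every $k$, and telescoping gives $f(u) = f(v)$.

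To conclude, given any $\epsilon > 0$ I would take this same $\delta$ (which in fact does not depend on $\epsilon$): then $d_{\rho}(u,v) < \delta$ implies $|f(u) - f(v)| = 0 < \epsilon$, the definition of uniform continuity. I do not expect a genuine obstacle here; the only point needing care is that the infimum defining $d_{\rho}$ need merely be approximated, so one must select a near-optimal path of length below $\delta$ rather than invoke a minimizing geodesic. Note that the finite-volume condition $vol_{\rho}(\graph) < \infty$ is never used, in keeping with the remark preceding the lemma that some results hold for an arbitrary weight function $\rho$.
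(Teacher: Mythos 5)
Your proof is correct and follows essentially the same route as the paper: both arguments take $\delta$ below the minimum $\rho$-length of the finitely many edges across which $f$ changes, and observe that any path of total length less than $\delta$ avoids all such edges, forcing $f(u)=f(v)$. Your write-up is in fact a bit more explicit than the paper's (naming $E_f$, handling the empty case, and noting that one only needs a near-optimal path rather than a geodesic), but there is no substantive difference in method.
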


\begin{proof}
For each $f \in \alg $ there are only finitely many vertices $v \in \vertexset $ such that $f(v) \not= f(u)$ for some
$u \sim v$.  Each $v$ has finitely many adjacent vertices, so there is a 
$\delta > 0 $ such that $d_{\rho} (x,y) < \delta $ for any vertex pair $x,y$ implies the value of $f$ does not change
along a path of length less than $\delta $ from $x$ to $y$, so $|f(x) - f(y)| = 0$. 
\end{proof}

\begin{lem} \label{finran}
If $\graph $ is connected, any $f \in \alg $ has finite range.
\end{lem}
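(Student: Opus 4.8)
The plan is to exploit the defining finiteness property of $\alg$ directly: for $f \in \alg$ there are only finitely many edges across which $f$ changes value. First I would set $E_0 = \{[u,v] \in \edgeset : f(u) \neq f(v)\}$, which is finite by the definition of $\alg$, and let $V_0$ be the set of endpoints of the edges in $E_0$; since each edge has two endpoints, $V_0$ is also finite. The goal is to show that the range of $f$ is contained in the finite set $f(V_0)$, apart from a trivial degenerate case.

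Next I would delete the edges of $E_0$ from $\graph$, keeping the same vertex set, and observe that $f$ takes equal values at the two endpoints of every surviving edge. Hence $f$ is constant on each connected component of this edge-deleted graph, and the range of $f$ equals the set of values $f$ assumes on these components. It therefore suffices to show that each component carries a value already attained on $V_0$.

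This is where connectivity of $\graph$ enters, and it is the step requiring the most care. Assuming $E_0 \neq \emptyset$, I would argue that every connected component $K$ of the edge-deleted graph meets $V_0$. If some $K$ contained no endpoint of an edge of $E_0$, then any edge of $\graph$ leaving $K$ would have to lie outside $E_0$ (otherwise it would contribute an endpoint in $K$); but such an edge belongs to the edge-deleted graph, forcing its other endpoint into $K$ as well. Thus $K$ would have no edges of $\graph$ leaving it, making it a union of components of $\graph$, hence all of $\vertexset$ by connectivity — which is absurd once $E_0 \neq \emptyset$, since then $K$ would contain an endpoint after all. Consequently the value of $f$ on each component equals $f(v)$ for some $v \in V_0$, so the range lies in the finite set $f(V_0)$.

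I do not anticipate a genuine obstacle here; the result is purely combinatorial and rests only on the finiteness of $E_0$ together with the connectivity hypothesis. The two points needing attention are the anchoring argument just described — verifying that each component of the edge-deleted graph is attached to one of the finitely many change-edge endpoints — and isolating the degenerate case $E_0 = \emptyset$, in which $f$ is constant on the connected graph $\graph$ and the range is a single point. Note in particular that the preceding uniform-continuity lemma is not needed for this conclusion.
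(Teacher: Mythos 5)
Your proof is correct and is essentially the paper's argument in slightly different clothing: the paper follows, from an arbitrary vertex $u$, a path on which $f$ is constant until it reaches an endpoint of a change edge, while you phrase the same idea as ``every component of the graph with the finitely many change edges deleted must contain such an endpoint.'' Both hinge on exactly the same two facts --- connectivity of $\graph$ and finiteness of the set of change-edge endpoints --- and your explicit handling of the constant case matches the paper's ``suppose $f$ is not constant.''
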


\begin{proof}
Suppose $f \in \alg $ is not constant, and suppose $u \in \vertexset $. 
Find a path $(u = v_0,v_1,\dots ,v_N)$ such that
$f(v_n) = f(u)$ for $n \le N$ and $f(v_N) \not= f(w)$ 
for some $w$ adjacent to $v_N$.   
Since the set of such vertices $v_N$ is finite, 
$f(u)$ has one of a finite set of values.
\end{proof}

The metric $\rho $ induces the discrete topology on $\vertexset $.  With this topology, $\vertexset $ is locally compact and Hausdorff.   
As the uniform limits of uniformly continuous functions,  the functions $f \in \Balg $ 
are uniformly continuous and bounded, leading \cite[p. 149]{Royden} to the next lemma.

\begin{lem}
Functions $f \in \Balg$ have a unique continuous extension to $\graphbar _{\rho }$, the metric completion of $\graph $ with
the metric $d_{\rho }$.
\end{lem}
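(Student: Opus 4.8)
The plan is to apply the classical extension theorem for uniformly continuous maps into a complete space, exactly as cited from Royden. The text preceding the statement already notes that every $f \in \Balg$ is uniformly continuous on the metric space $(\vertexset, d_{\rho})$: such an $f$ is a uniform limit of functions in $\alg$, each uniformly continuous by the first lemma of this subsection, and a uniform limit of uniformly continuous functions is uniformly continuous. I would take this as the working hypothesis.

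First I would recall that $\vertexset$ is dense in its completion $\graphbar_{\rho}$, by the very definition of metric completion. For a point $x \in \graphbar_{\rho}$, choose a sequence $\{v_n\} \subset \vertexset$ with $d_{\rho}(v_n, x) \to 0$; this sequence is Cauchy. By uniform continuity, for each $\epsilon > 0$ there is a $\delta > 0$ with $|f(u) - f(w)| < \epsilon$ whenever $d_{\rho}(u,w) < \delta$, so $\{f(v_n)\}$ is Cauchy in $\real$ and, since $\real$ is complete, converges. I would define the extension by $\tilde f(x) = \lim_n f(v_n)$.

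The remaining checks are routine. For well-definedness: if $v_n \to x$ and $w_n \to x$ then $d_{\rho}(v_n, w_n) \to 0$, so $|f(v_n) - f(w_n)| \to 0$ and the two candidate limits agree. The same $\epsilon$-$\delta$ estimate carries over to pairs of boundary points, giving uniform continuity of $\tilde f$ on all of $\graphbar_{\rho}$, and $\tilde f$ clearly restricts to $f$ on $\vertexset$. Uniqueness is immediate, since any continuous function on $\graphbar_{\rho}$ is determined by its values on the dense subset $\vertexset$.

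There is no substantive obstacle here; the one point deserving emphasis is that it is uniform continuity, not mere pointwise continuity on the incomplete space $\vertexset$, that forces $\{f(v_n)\}$ to be Cauchy and hence guarantees the limit exists. Because that uniform continuity has already been supplied by the earlier lemmas, the argument collapses to the standard completion-extension theorem.
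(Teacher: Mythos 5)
Your argument is correct and is essentially the paper's: the paper also derives the lemma from the observation that every $f \in \Balg$, being a uniform limit of the uniformly continuous functions in $\alg$, is uniformly continuous on $(\vertexset, d_{\rho})$, and then invokes the standard extension theorem for uniformly continuous maps into a complete space (cited from Royden). You have merely written out the details of that citation, which is fine.
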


\subsection{Topology}

\begin{prop} \label{compact}
If $vol _{\rho }(\graph ) < \infty $, then $\graphbar _{\rho} $ compact.
\end{prop}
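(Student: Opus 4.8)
The plan is to invoke the standard characterization that a metric space is compact if and only if it is complete and totally bounded. Since $\graphbar_{\rho}$ is by construction the metric completion of $(\vertexset, d_{\rho})$, it is automatically complete, so the whole task reduces to proving that $(\vertexset, d_{\rho})$ is totally bounded. Total boundedness then transfers to the completion at no cost, because $\vertexset$ is dense in $\graphbar_{\rho}$: a finite $\epsilon/2$-net for $\vertexset$ serves as a finite $\epsilon$-net for $\graphbar_{\rho}$.

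To prove total boundedness I would fix $\epsilon > 0$ and exploit the hypothesis $vol_{\rho}(\graph) = \sum_{e \in \edgeset} \rho(e) < \infty$. Since this series converges, I can choose a \emph{finite} set of edges $E_0 \subset \edgeset$ whose complementary tail satisfies $\sum_{e \notin E_0} \rho(e) < \epsilon$. Let $V_0$ be the set of all endpoints of edges in $E_0$ together with a fixed root vertex $\root$; this is a finite subset of $\vertexset$. The claim is that every vertex lies within $d_{\rho}$-distance $\epsilon$ of $V_0$, so that the finitely many balls $\{ B(w,\epsilon) : w \in V_0 \}$ already cover $\vertexset$.

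The key step is verifying this claim, and it is where connectedness enters. Given $v \in \vertexset$, follow any simple path from $v$ toward $\root$ and stop at the first vertex $w = x_m$ that lies in $V_0$, obtaining $v = x_0, x_1, \dots, x_m = w$; such a $w$ exists because $\root \in V_0$. By minimality the vertices $x_0, \dots, x_{m-1}$ all lie outside $V_0$, so each edge $[x_i, x_{i+1}]$ has an endpoint outside $V_0$ and therefore cannot belong to $E_0$, since every edge of $E_0$ has \emph{both} endpoints in $V_0$. Because the path is simple its edges are distinct, which yields
\[ d_{\rho}(v, w) \le \sum_{i=0}^{m-1} \rho(x_i, x_{i+1}) \le \sum_{e \notin E_0} \rho(e) < \epsilon . \]
This establishes the covering, hence total boundedness of $(\vertexset, d_{\rho})$.

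I expect the main obstacle to be exactly this combinatorial bookkeeping: arguing cleanly that the initial segment from $v$ to the first $V_0$-vertex uses only edges outside the finite set $E_0$ is what converts a mere tail estimate on edge lengths into a \emph{uniform} distance bound valid for all vertices simultaneously. Once total boundedness is secured, combining it with the completeness of the completion gives compactness of $\graphbar_{\rho}$, finishing the proof.
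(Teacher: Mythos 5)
Your proof is correct and follows the same route as the paper: establish total boundedness of $(\vertexset, d_{\rho})$ from the finite-volume hypothesis and combine it with completeness of the metric completion. The paper merely asserts the existence of the finite $\epsilon$-net, whereas you supply the combinatorial detail (the simple path to the first vertex of $V_0$ using only tail edges), which is a faithful filling-in of the same argument.
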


\begin{proof}
If $vol _{\rho}(\graph ) < \infty $, 
then for every $\epsilon > 0$ there is a finite set $V$ of vertices
such that $d_{\rho}(u,V) < \epsilon $ for all $u \in \vertexset $.  That is, the completion of $\graph $ with respect
to the metric $d_{\rho}$ is totally bounded, and $\graphbar _{\rho } $ is thus compact \cite[p. 156]{Royden}.
\end{proof}

The structure of the compactification $\graphbar _{\rho}$ coming from a choice of edge weights with $vol _{\rho }(\graph ) < \infty $
varies wildly with the initial network $\graph $.  If $\graph $ is the integer lattice $\Z ^d$, $\graphbar _{\rho}$ will be its one point compactification.
If $\graph $ is an infinite binary tree, $\graphbar _{\rho}$ will include uncountably many points.

Modifying ideas from \cite{Carlson08}, say that $\graphbar _{\rho} $ is {\it weakly connected} 
if for every pair of distinct points $u,v \in \graphbar _{\rho}$ 
there is a finite set $W$ of edges in $\graph $ such that every path from
$u$ to $v$ contains an edge from $W$.   Trees provide a class of examples with weakly connected completions.
One may extend $\graph $ to a metric graph $\graph _m$ 
by identifying the combinatorial edge $[u,v]$ with an interval of length $\rho(u,v)$.  
By this device some of  the results of \cite{Carlson08} carry over to the present context.  

Finite volume graphs also have weakly connected completions \cite{Carlson08}. 
For a different approach, see \cite{Georg11}.

\begin{thm} \label{weakconn}
If $vol _{\rho }(\graph ) < \infty $ then $\graphbar _{\rho } $ is weakly connected.
\end{thm}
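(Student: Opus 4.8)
The plan is to produce, for each pair of distinct points $u,v \in \graphbar _{\rho}$, a single finite set $W$ of edges that every $u$--$v$ path must traverse, and to locate $W$ by an averaging (coarea) argument driven by the finite volume hypothesis. Set $\delta = d_{\rho}(u,v) > 0$ and consider the $1$-Lipschitz function $\phi : \graphbar _{\rho} \to [0,\infty )$ given by $\phi (x) = d_{\rho}(u,x)$. For a level $r > 0$ let $E_r$ be the set of edges $[w,w'] \in \edgeset $ whose endpoints straddle $r$, meaning $\min (\phi (w),\phi (w')) < r < \max (\phi (w),\phi (w'))$, and let $N(r) = |E_r|$. The target set $W$ will be $E_{r^*}$ for a carefully chosen level $r^*$.

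First I would show that every path from $u$ to $v$ contains an edge of $E_{r^*}$ for any level $r^* \in (0,\delta )$ at which no vertex value $\phi (w)$ is attained. Along a path $\{ v_k \}$ the continuity of $\phi $ forces $\phi (v_k) \to \phi (u) = 0$ at the $u$-end and $\phi (v_k) \to \phi (v) = \delta $ at the $v$-end, so the sequence $\phi (v_k)$ takes values both below and above $r^*$. Since no $v_k$ satisfies $\phi (v_k) = r^*$, a discrete intermediate value argument produces consecutive vertices $v_j,v_{j+1}$ with $\phi (v_j) < r^* < \phi (v_{j+1})$, whence the connecting edge lies in $E_{r^*}$. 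Thus $W := E_{r^*}$ meets the definition of weak connectivity, provided $E_{r^*}$ is finite.

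The finiteness of $E_{r^*}$ for a suitable level is exactly where $vol _{\rho}(\graph ) < \infty $ enters. Because $|\phi (w) - \phi (w')| \le d_{\rho}(w,w') \le \rho (w,w')$ for every edge, the edge $[w,w']$ contributes to $E_r$ precisely for $r$ in an interval of length $|\phi (w) - \phi (w')| \le \rho (w,w')$. Summing by Tonelli,
\[\int_0^{\infty} N(r) \, dr = \sum_{[w,w'] \in \edgeset} |\phi (w) - \phi (w')| \le \sum_{[w,w'] \in \edgeset} \rho (w,w') = vol _{\rho}(\graph ) < \infty ,\]
so $N \in L^1(0,\infty )$ and $N(r) < \infty $ for almost every $r$. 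Since $\vertexset $ is countable, only countably many levels arise as some $\phi (w)$; discarding these together with the measure-zero set on which $N$ is infinite, I can select $r^* \in (0,\delta )$ satisfying both requirements, and then $W = E_{r^*}$ is the desired finite edge set.

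I expect the main obstacle to be uniformity: a naive approach would construct a separating set path by path, but the paths in $\graphbar _{\rho}$ form an uncontrolled family, so no single finite $W$ emerges that way. The averaging identity above is precisely what converts the global finite-volume constraint into the existence of one good level whose straddling edges form a finite set working simultaneously for all paths. Secondary care is needed in the intermediate value step to handle the ray and double-ray cases uniformly, and to ensure that $r^*$ avoids the countably many vertex values so that a genuine strict crossing, and hence an actual edge of $E_{r^*}$, is produced rather than a mere tangency.
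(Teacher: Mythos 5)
Your proof is correct, and it takes a genuinely different route from the paper's. The paper argues by tail truncation: since $vol_{\rho}(\graph)<\infty$, one can delete a finite set $W$ of edges so that the surviving edge set $\edgeset_1$ has total length below $d_{\rho}(x,y)/2$; a path from $x$ to $y$ avoiding $W$ would eventually join vertices $v_{-n},v_n$ at distance exceeding $d_{\rho}(x,y)/2$ by a simple path of total length below $d_{\rho}(x,y)/2$, an immediate contradiction requiring nothing beyond the triangle inequality and convergence of the edge-length series. You instead run a coarea argument on the level sets of $\phi=d_{\rho}(u,\cdot)$: the Tonelli identity $\int_0^{\infty}N(r)\,dr=\sum_{[w,w']\in\edgeset}|\phi(w)-\phi(w')|\le vol_{\rho}(\graph)$ makes $N(r)$ finite for almost every $r$, and choosing $r^*\in(0,\delta)$ outside the countable set of vertex values makes every crossing strict, so your discrete intermediate value step (correctly arranged to cover finite paths, rays, and double rays, since $\phi(v_k)\to 0$ and $\phi(v_k)\to\delta$ at the two ends) forces every $u$--$v$ path through the finite set $E_{r^*}$; your worry about tangency is real and your fix is exactly right, since a vertex sitting precisely at level $r^*$ would let a path touch the level without strictly straddling any edge. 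Your route costs a little measure theory but buys structure: $E_{r^*}$ is an edge cut separating $\{\phi<r^*\}$ from $\{\phi>r^*\}$, so you essentially obtain in one stroke the clopen separation that the paper develops afterwards in \lemref{cloplem} and \thmref{separate}, and Chebyshev's inequality even gives the quantitative refinement that $r^*$ can be chosen with $N(r^*)\le 2\,vol_{\rho}(\graph)/\delta$; your argument also handles all positions of $u$ and $v$ uniformly, whereas the paper writes out only the main case $x,y\in\graphbar_{\rho}\setminus\graph$. What the paper's route buys is brevity and elementarity: the contradiction is two lines, with no integration and no generic-level selection.
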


\begin{proof}
The main case considers distinct points $x$ and $y$ in $\graphbar _{\rho} \setminus \graph $.
Remove a finite set of edges from $\edgeset $ so that the remaining edgeset $\edgeset _1$ satisfies 
\[ \sum_{[v_1,v_2] \in \edgeset _1} \rho (v_1,v_2) < \frac{d_{\rho} (x,y)}{2}.\] 
Proceeding with a proof by contradiction,
suppose $(\dots ,v_{-1},v_0,v_1,\dots )$ is a path 
from $x$ to $y$ using only edges in $\edgeset _1$.
Then for $n$ sufficiently large $d_{\rho} (v_{-n},v_n) > \frac{d_{\rho}(x,y)}{2}$,
but there is a simple path from $v_{-n}$ to $v_n$ using only edges from $\edgeset _1$,
so $d_{\rho}(v_{-n},v_n) < \frac{d_{\rho}(x,y)}{2}$.
\end{proof}

Suppose $W$ is a nonempty finite set of edges in $\edgeset $.
For $x \in \graphbar _{\rho }$, let $U_W(x)$ be the set of points $y \in \graphbar _{\rho}$
which can be connected to $x$ by a path containing no edge of $W$. 

\begin{lem} \label{cloplem}
For all $x \in \graphbar _{\rho } $ the set $U_W(x)$ is both open and closed in $\graphbar _{\rho} $.
\end{lem}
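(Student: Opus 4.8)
The plan is to understand $U_W(x)$ through the components of the graph obtained by deleting the finite edge set $W$. Write $\graph \setminus W$ for the graph with vertex set $\vertexset$ and edges $\edgeset \setminus W$. Since $\graph$ is connected and $W$ is finite, deleting $W$ produces only finitely many connected components $\graph _1, \dots , \graph _m$ with $m \le |W| + 1$. Let $\overline{\graph _i}$ denote the closure of $\graph _i$ (as a set of vertices) in $\graphbar _{\rho}$. The goal is to show that these closures partition $\graphbar _{\rho}$ into finitely many clopen pieces, and that $U_W(x)$ is exactly the piece containing $x$.

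The crux is a separation estimate. Set $\delta = \min_{e \in W} \rho (e)$, which is positive because $W$ is finite and $\rho$ is positive. If $u$ and $w$ lie in distinct components of $\graph \setminus W$, then every path joining them in $\graph$ must traverse at least one edge of $W$, so $d_{\rho}(u,w) \ge \delta$. First I would use this to show the closures $\overline{\graph _i}$ are pairwise disjoint: a common point would be the limit of a sequence from $\graph _i$ and a sequence from $\graph _j$ with $i \ne j$, forcing the $d_{\rho}$-distance between the two sequences to zero and contradicting the bound $\delta$. Next I would show they cover $\graphbar _{\rho}$: every point is the limit of a $d_{\rho}$-Cauchy sequence of vertices, and once the terms of such a sequence are within $\delta$ of one another they must lie in a single component, so the tail of the sequence, hence its limit, belongs to one $\overline{\graph _i}$. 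Thus $\graphbar _{\rho}$ is the disjoint union of the finitely many closed sets $\overline{\graph _i}$; since each is the complement of the union of the remaining closed sets, each is also open, hence clopen.

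It remains to identify $U_W(x)$ with the component closure $\overline{\graph _{i(x)}}$ that contains $x$. For the inclusion $U_W(x) \subseteq \overline{\graph _{i(x)}}$, any path from $x$ to $y$ avoiding $W$ has all its edges in a single component; its vertices near the $x$-end converge to $x$ and so lie in $\graph _{i(x)}$ by the covering argument, whence the whole path, and in particular its other limit $y$, lies in $\overline{\graph _{i(x)}}$. For the reverse inclusion I would invoke the fact recorded earlier that, for a connected graph, a path joins any two points of the completion; applied to the connected graph $\graph _i$, this produces a path lying within $\graph _i$ — and therefore avoiding $W$ — joining $x$ to any prescribed $y \in \overline{\graph _{i(x)}}$, so $y \in U_W(x)$.

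The main obstacle I anticipate is this reverse inclusion, specifically the mismatch between the intrinsic metric of $\graph _i$ and the restriction of $d_{\rho}$ to $\graph _i$: because a path in $\graph$ may leave and re-enter a component through $W$, these two metrics need not coincide, so I must check that the path-joining result transfers from the intrinsic completion of $\graph _i$ to the subspace $\overline{\graph _i} \subseteq \graphbar _{\rho}$. I expect to handle this by noting that the identity on $\graph _i$ is $1$-Lipschitz from the intrinsic metric to $d_{\rho}$ and hence extends to a continuous surjection of completions, along which a path joining preimages of $x$ and $y$ descends to a $W$-avoiding path joining $x$ and $y$ in $\graphbar _{\rho}$. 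With $U_W(x) = \overline{\graph _{i(x)}}$ established, the clopen conclusion follows from the partition.
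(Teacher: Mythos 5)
Your argument takes a genuinely different route from the paper's. The paper proves openness directly: it fixes $\epsilon$ with $2\epsilon$ smaller than the length of every edge of $W$, notes that a path of length less than $2\epsilon$ cannot contain an edge of $W$, and splices such a short path into a given $W$-avoiding path from $x$ to $y$ to reach any $z$ with $d_{\rho}(y,z)<\epsilon$; closedness then follows because the complement of $U_W(x)$ is a union of sets $U_W(\cdot)$, each open by the same argument. You instead build a finite clopen partition of $\graphbar _{\rho}$ out of the closures $\overline{\graph _i}$ of the components of $\graph \setminus W$ and identify $U_W(x)$ with the block containing $x$. Your separation estimate $d_{\rho}(u,w)\ge \delta$ for vertices in distinct components is correct and is really the same quantitative fact the paper exploits; your packaging buys a global picture (all the sets $U_W(\cdot)$ at once, at most $|W|+1$ of them) at the price of using connectivity of $\graph$ (a standing assumption in this section, though the paper's proof does not need it) and of invoking, for each $\graph _i$, the paper's unproved assertion that a connected graph's completion is path-connected. (The paper's proof also quietly needs paths in the completion realizing distances up to $\epsilon$, so that reliance is not unique to you.)

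The one step that does not follow as written is the surjectivity claim in your reverse inclusion. A $1$-Lipschitz identity map between two metrics on $\graph _i$ does extend continuously to the completions, but that extension need \emph{not} be surjective: a $d_{\rho}$-Cauchy sequence of vertices of $\graph _i$ could fail to be Cauchy for the (larger) intrinsic metric, in which case the corresponding point of $\overline{\graph _i}\subset \graphbar _{\rho}$ would have no preimage, and you could not produce the $W$-avoiding path to it. Your own separation estimate closes this gap: if $u,w\in \graph _i$ and $d_{\rho}(u,w)<\delta$, then every path in $\graph$ from $u$ to $w$ of length less than $\delta$ avoids $W$ and hence stays inside $\graph _i$, so the intrinsic metric of $\graph _i$ actually agrees with $d_{\rho}$ on all pairs at $d_{\rho}$-distance less than $\delta$. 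The two metrics therefore have the same Cauchy sequences, the map of completions is a bijection, and the descent of the path from the intrinsic completion to $\graphbar _{\rho}$ goes through. With that repair, and the standing connectivity hypothesis made explicit, your proof is complete.
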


\begin{proof}
Take $\epsilon > 0$ such that $2\epsilon < \rho (v_1,v_2)$ for all edges $[v_1,v_2] \in W$.
Suppose $y$ and $z$ are points of $\graphbar _{\rho }$ with $d_{\rho} (y,z) < \epsilon $, 
so there is a path $\gamma _1$ of length smaller than $\epsilon $ from $y$ to $z$.  If 
$x$ is a point of $\graphbar _{\rho }$, and $y \in U_W(x)$, then 
there is a path $\gamma _2$ from $x$ to $y$ containing no edge from $W$.
For $j = 1,2$ there are then vertices $u_j \in \vertexset $ with $u_j \in \gamma _j$ 
such that $d_{\rho }(u_j,y) < \epsilon /2$.   There is a path running from $x$ to $u_1$,
from $u_1$ to $u_2$, and from $u_2$ to $z$ containing no edge from $W$.
This shows that $U_W(x)$ is open.  The argument is the same for each connected 
component of the complement of $U_W(x)$, which is thus also open.
\end{proof}

\begin{thm} \label{totally} 
A weakly connected $\graphbar _{\rho} $ is totally disconnected.
\end{thm}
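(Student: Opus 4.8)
The plan is to derive total disconnectedness directly from the clopen sets supplied by \lemref{cloplem}. Recall the elementary topological fact that a space in which every pair of distinct points can be separated by a set that is simultaneously open and closed has singleton connected components: a connected subset cannot meet both such a set and its (also clopen) complement, so it can contain at most one point. It therefore suffices to show that for any two distinct $x,y \in \graphbar _{\rho}$ there is a clopen set containing $x$ but not $y$, and I would produce this separating set as one of the sets $U_W(x)$.

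Given distinct $x,y \in \graphbar _{\rho}$, the hypothesis of weak connectedness provides a nonempty finite edge set $W \subset \edgeset$ such that every path from $x$ to $y$ contains an edge of $W$. By \lemref{cloplem} the set $U_W(x)$ is both open and closed in $\graphbar _{\rho}$. Moreover $y \notin U_W(x)$: a path joining $y$ to $x$ with no edge of $W$ would, read in reverse, be a path from $x$ to $y$ avoiding $W$, contradicting the choice of $W$. Hence, once I check that $x \in U_W(x)$, the set $U_W(x)$ separates $x$ from $y$, and the reduction in the first paragraph finishes the argument.

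The one point that needs care, and which I expect to be the main obstacle, is verifying $x \in U_W(x)$ when $x$ lies on the boundary $\graphbar _{\rho} \setminus \graph$, since then a path from $x$ to $x$ avoiding $W$ is not automatic. The key is to exhibit a single ray from some vertex to $x$ that avoids $W$. Fix $\epsilon > 0$ with $2\epsilon < \rho(v_1,v_2)$ for every edge $[v_1,v_2] \in W$, as in the proof of \lemref{cloplem}. Choose vertices $w_n \to x$ and join consecutive ones by paths $\gamma _n$ of length close to the geodesic distance $d_{\rho}(w_n,w_{n+1})$; since the sequence is Cauchy, for $n$ beyond some $N$ these lengths fall below $2\epsilon$, so no such $\gamma _n$ can contain an edge of $W$, a single one of which already has length exceeding $2\epsilon$. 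Concatenating $\gamma _N, \gamma _{N+1}, \dots$ produces a ray from the vertex $w_N$ to $x$ avoiding $W$, whose vertices converge to $x$ because the tail lengths tend to $0$; in particular each such vertex lies in $U_W(x)$.

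To conclude $x \in U_W(x)$ I would then reuse the $\epsilon$-argument of \lemref{cloplem}: choosing a vertex $v$ on this ray with $d_{\rho}(v,x) < \epsilon$, the membership $v \in U_W(x)$ together with $d_{\rho}(v,x) < \epsilon$ forces $x \in U_W(x)$, exactly as that proof propagates membership across points within distance $\epsilon$. With $x \in U_W(x)$ established, $U_W(x)$ is a clopen set containing $x$ but not $y$. Since $x$ and $y$ were an arbitrary pair of distinct points, every pair is separated by a clopen set, and therefore $\graphbar _{\rho}$ is totally disconnected.
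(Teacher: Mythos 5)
Your proof is correct and follows essentially the same route as the paper: invoke weak connectedness to obtain the finite edge set $W$, apply \lemref{cloplem} to get the clopen set $U_W(x)$, and observe that it contains $x$ but not $y$. The only difference is that you explicitly verify $x \in U_W(x)$ for boundary points $x$ by constructing a ray to $x$ avoiding $W$, a point the paper's proof leaves implicit; this is a worthwhile bit of added care but not a change of method.
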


\begin{proof}
Suppose $v_1$ and $v_2$ are distinct points in $\graphbar _{\rho}$, with 
$W$ being a finite set of edges in $\edgeset $ such that every path from
$v_1$ to $v_2$ contains an edge from $W$.    
The set $U_W(v_1)$ is both open and closed.  
Since $U_W(v_1)$ and $U_W(v_2)$ are disjoint, 
$U_W(v_2) \subset U_W^c(v_1)$, the complement of $U_W(v_1)$ in $\graphbar _{\rho}$. 
Thus $v_1$ and $v_2$ lie in different connected components. 
\end{proof}

If $\graphbar _{\rho}$ is totally disconnected and compact, it
has a rich collection of clopen sets, that is sets which are both open and closed.
In fact \cite{Carlson08} or \cite[p. 97]{Hocking} 
for any $x \in \graphbar _{\rho}$ and any $\epsilon > 0$
there is a clopen set $U$ such that $x \in U \subset B_{\epsilon }(x)$.  In particular
any compact subset of $\graphbar _{\rho}$ can then be approximated by a clopen set.

The next result shows that $\alg $ separates points of $\graphbar _{\rho}$ if and only
if $\graphbar _{\rho}$ is weakly connected.

\begin{thm} \label{separate}
Suppose $\graphbar _{\rho}$ is weakly connected.  If $x$ and $y$ are distinct points of
$\graphbar _{\rho}$, there is a function $f \in \alg $ whose range is $\{ 0 ,1 \}$
such that $f(z) = 0$ for $z$ in an open neighborhood $U$ of $x$, and  
$f(z) = 1$ for $z$ in an open neighborhood $V$ of $y$. 

Conversely, if $\alg $ separates points of $\graphbar _{\rho}$, then 
$\graphbar _{\rho}$ is weakly connected. 
\end{thm}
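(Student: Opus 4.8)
The plan is to prove the two implications separately, building the forward direction on the clopen sets $U_W(x)$ furnished by \lemref{cloplem} and the converse on the continuity of the extensions of $\alg$ functions to $\graphbar_{\rho}$. For the forward direction I would fix distinct points $x,y$ and invoke weak connectivity to obtain a finite edge set $W$ such that every path from $x$ to $y$ meets $W$; equivalently $y \notin U_W(x)$, while $x \in U_W(x)$ by the trivial path. By \lemref{cloplem} the set $U_W(x)$ is clopen, hence so is its complement, and these two sets partition $\graphbar_{\rho}$ into disjoint open neighborhoods of $x$ and $y$. I would then define $f:\vertexset \to \{0,1\}$ by $f(v)=0$ for $v \in U_W(x)$ and $f(v)=1$ otherwise.

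The crux is to verify $f \in \alg$, i.e. that $f$ changes value across only finitely many edges. The key observation is that if $[v,w] \in \edgeset$ with $v \in U_W(x)$ and $w \notin U_W(x)$, then necessarily $[v,w] \in W$: otherwise a $W$-avoiding path from $x$ to $v$ could be extended across $[v,w]$ to reach $w$, contradicting $w \notin U_W(x)$. Thus the edges across which $f$ jumps form a subset of the finite set $W$, so $f \in \alg$. Consequently $f$ extends continuously to $\graphbar_{\rho}$, and because $U_W(x)$ and its complement are open with $\vertexset$ dense in $\graphbar_{\rho}$, the extension is identically $0$ on $U_W(x)$ and identically $1$ on its complement. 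Taking $U = U_W(x)$ and $V = \graphbar_{\rho} \setminus U_W(x)$ finishes this direction; the range is exactly $\{0,1\}$ since both clopen pieces contain vertices.

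For the converse I would assume $\alg$ separates points and fix distinct $x,y \in \graphbar_{\rho}$, choosing $f \in \alg$ (read via its continuous extension) with $f(x) \neq f(y)$. Let $W$ be the set of edges $[u,v]$ with $f(u) \neq f(v)$; since $f \in \alg$ this set is finite. It remains to show every path from $x$ to $y$ contains an edge of $W$. Arguing by contradiction, a $W$-avoiding path has $f$ constant along its vertices, and this constant value equals $f(x)$ and $f(y)$ at the two ends — directly when the path is finite, and via continuity of the extension together with the limit conditions defining convergence of the path to $x$ and to $y$ in the ray and double-ray cases. This forces $f(x)=f(y)$, a contradiction, so $W$ witnesses weak connectivity for the pair $x,y$.

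I expect the main obstacle to be the membership claim $f \in \alg$ in the forward direction: one must argue carefully that value changes of the indicator of $U_W(x)$ occur only across edges of $W$, and then separately confirm that the continuous extension really is the clopen indicator of the complement rather than merely agreeing with it on $\vertexset$. The converse, by contrast, should reduce quickly to constancy of $f$ along a $W$-avoiding path combined with the continuity already available from the extension lemma.
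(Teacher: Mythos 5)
Your proposal is correct and follows essentially the same route as the paper: the forward direction builds the indicator of $U_W(x)$ and checks it lies in $\alg$ because value changes can only occur across edges of the finite set $W$, and the converse takes $W$ to be the edge set where a separating $f$ jumps and derives a contradiction from constancy of $f$ along a $W$-avoiding path. The only difference is that you spell out two details the paper leaves implicit (why jumps of the indicator force membership in $W$, and the continuity argument at the endpoints in the ray and double-ray cases), which is a welcome elaboration rather than a deviation.
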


\begin{proof}
Let $W$ be a finite set of edges in $\graph $ such that every path from $x$ to $y$ 
contains an edge from $W$.  By \lemref{cloplem} the set $U_W(x)$ is both open and
closed in $\graphbar _{\rho}$, as is $U_W^c(x)$.
Define $f(z) = 0$ for $z \in U_W(x)$ and $f(z) = 1$ for $z \in U_W^c(x)$.
For every vertex $v$ and adjacent vertex $w$ we have $f(v) = f(w)$ 
unless $[v,w] \in W$.  Since $W$ is finite, $f \in \alg $.

In the other direction, suppose $\alg $ separates points of $\graphbar _{\rho}$.
Let $x$ and $y$ be distinct points in $\graphbar _{\rho}$, and suppose $f \in \alg $
with $f(x) < f(y)$.  Let $W$ be the finite set of edges $[u,v]$
such that $f(u) \not= f(v)$.  If $\gamma $ is any path starting at $x$ which 
contains no edge from $W$, then $f$ must be constant along $\gamma $.
That is, every path from $x$ to $y$ must contain an edge from $W$, so
$\graphbar _{\rho}$ is weakly connected. 

\end{proof}

\begin{cor} \label{sepsets}
Suppose $\graphbar _{\rho}$ is weakly connected.  If $\Omega $ and $\Omega _1$
are nonempty disjoint compact subsets of $\graphbar _{\rho}$, then there is a function
$f \in \alg $ such that $0 \le f \le 1$,
\[f(x) = 1, \quad x \in \Omega , \quad f(y) = 0, \quad y \in \Omega _1.\]
\end{cor}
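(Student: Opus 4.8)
The plan is to upgrade the point separation of \thmref{separate} to a separation of the compact sets $\Omega$ and $\Omega_1$ by two nested applications of compactness, exploiting two features of $\alg$: it is an algebra closed under products and it contains the constants (so $g \mapsto 1-g$ maps $\alg$ to $\alg$). The separating functions produced by \thmref{separate} have range $\{ 0,1 \}$, and since such a continuous function has clopen level sets, no delicate topological estimates are needed; all bookkeeping about the finitely many ``boundary'' edges is handled automatically by the algebraic closure properties.

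First I would fix a point $x \in \Omega$. For each $y \in \Omega_1$, \thmref{separate} supplies a function $g_{x,y} \in \alg$ with range $\{ 0,1 \}$ that equals $0$ on a neighborhood of $x$ and $1$ on a neighborhood of $y$. Setting $h_{x,y} = 1 - g_{x,y} \in \alg$ gives a $\{ 0,1 \}$-valued function equal to $1$ near $x$ and $0$ near $y$. The set $\{ h_{x,y} = 0 \}$ is clopen and contains $y$, so as $y$ ranges over $\Omega_1$ these sets form an open cover; compactness of $\Omega_1$ yields finitely many points $y_1, \dots, y_n$. The product
\[ h_x = \prod_{i=1}^n h_{x,y_i} \]
then lies in $\alg$, has range $\{ 0,1 \}$, equals $1$ on a neighborhood of $x$, and vanishes on all of $\Omega_1$ (at each point of $\Omega_1$ some factor vanishes).

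Next I would let $x$ vary over $\Omega$. Each clopen set $\{ h_x = 1 \}$ contains a neighborhood of the corresponding $x$, so these sets cover $\Omega$; compactness of $\Omega$ produces finitely many points $x_1, \dots, x_m$. Setting
\[ f = 1 - \prod_{j=1}^m (1 - h_{x_j}), \]
I obtain $f \in \alg$ with range $\{ 0,1 \}$, so automatically $0 \le f \le 1$. At any point of $\Omega_1$ every $h_{x_j}$ vanishes, hence each factor $1 - h_{x_j}$ equals $1$ and $f = 0$; at any point of $\Omega$ some $h_{x_j}$ equals $1$, so the corresponding factor vanishes and $f = 1$. This is the required function.

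The computations are routine; the only points requiring care are keeping each intermediate function inside $\alg$ and ordering the two compactness arguments correctly --- first collapsing $\Omega_1$ to a finite set for each fixed $x$, then collapsing $\Omega$. Since $\alg$ is an algebra containing $1$, products and complements stay in $\alg$, so the minor obstacle reduces to confirming that the level sets used in the covers are open, which is immediate because a continuous $\{ 0,1 \}$-valued function has clopen level sets.
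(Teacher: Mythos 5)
Your proof is correct and follows essentially the same strategy as the paper's: two nested compactness arguments using products of the $\{0,1\}$-valued separating functions from \thmref{separate}, with the final function built as $1$ minus a product of complements. The only difference is that you cover $\Omega_1$ first for each fixed $x\in\Omega$ and then cover $\Omega$, whereas the paper covers $\Omega$ first for each fixed $y\in\Omega_1$; this is a symmetric relabeling, not a different argument.
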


\begin{proof}
First fix $y \in \Omega _1$.
Using \thmref{separate}, find a finite cover $U_1,\dots ,U_N$ of $\Omega $ 
by open sets with corresponding functions $f_1,\dots ,f_N$ which satisfy
$f_n(z) = 1$ for all $z$ in some open neighborhood $V_y$ of $y$, while 
$f_n(z) = 0$ for all $z$ in $U_n$.  

Define $F_y = f_1\cdots f_N$.
Find a finite collection $F_1,\dots, F_N$ whose corresponding open sets
$V_1,\dots ,V_N$ cover $\Omega _1$.  The function 
$f = (1-F_1)\cdots (1-F_N)$ has the desired properties.
\end{proof}

\subsection{The maximal ideal space of $\Balg$}

The combination of \thmref{separate} and the Stone-Weierstrass Theorem \cite[p. 212]{Royden} yields the next result.

\begin{thm} \label{Stone}
If $\graphbar _{\rho}$ is weakly connected and compact, then $\alg $ is uniformly
dense in the continuous functions on $\graphbar _{\rho}$.
\end{thm}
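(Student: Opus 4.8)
The plan is to verify directly that $\alg$ satisfies the hypotheses of the Stone-Weierstrass theorem as a subalgebra of $C(\graphbar_{\rho})$, the real-valued continuous functions on the compact Hausdorff space $\graphbar_{\rho}$, and then simply invoke that theorem. The three things I need to line up are: that $\alg$ genuinely sits inside $C(\graphbar_{\rho})$ as a subalgebra, that it contains the constants (or at least vanishes nowhere), and that it separates points.

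First I would confirm the membership and algebra structure. Since $\alg \subset \Balg$, the earlier lemma on continuous extensions guarantees that every $f \in \alg$ extends uniquely to a continuous function on the completion $\graphbar_{\rho}$; because $\graphbar_{\rho}$ is compact by hypothesis, these extensions indeed lie in $C(\graphbar_{\rho})$. The defining property of $\alg$ — that $f(u)\not= f(v)$ for only finitely many adjacent pairs — is preserved under pointwise sums, scalar multiples, and products, so $\alg$ is a subalgebra; and the constant functions trivially lie in $\alg$, giving the constants-containing (hence vanishes-nowhere) condition at no cost.

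Second, the separation hypothesis is exactly the content of \thmref{separate}. Because $\graphbar_{\rho}$ is assumed weakly connected, that theorem produces, for any two distinct points $x,y \in \graphbar_{\rho}$, a function $f \in \alg$ with $f(x) = 0$ and $f(y) = 1$; in particular $f(x) \not= f(y)$, so $\alg$ separates points. With $\graphbar_{\rho}$ compact (by hypothesis) and Hausdorff (being a metric space), all the hypotheses of the real Stone-Weierstrass theorem are met, and the conclusion that $\alg$ is uniformly dense in $C(\graphbar_{\rho})$ follows at once.

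I do not expect a genuine obstacle here: once the hypotheses are assembled, the result is an immediate application of Stone-Weierstrass, and the substantive work (the separation property forcing weak connectedness and conversely) has already been carried out in \thmref{separate}. The only points demanding a moment's care are the bookkeeping ones — checking that the continuous extension respects the algebra operations, and that $\graphbar_{\rho}$ supplies the compact Hausdorff setting the theorem requires — both of which are routine given the earlier lemmas and \propref{compact}.
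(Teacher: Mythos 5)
Your proposal is correct and follows exactly the route the paper takes: the paper's proof is the single sentence that \thmref{separate} combined with the Stone--Weierstrass theorem yields the result, and your write-up simply makes explicit the routine verifications (subalgebra structure, constants, compact Hausdorff setting) that the paper leaves implicit. No gaps.
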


 \begin{figure} \label{bignet}
\begin{center}
\includegraphics[width=6in]{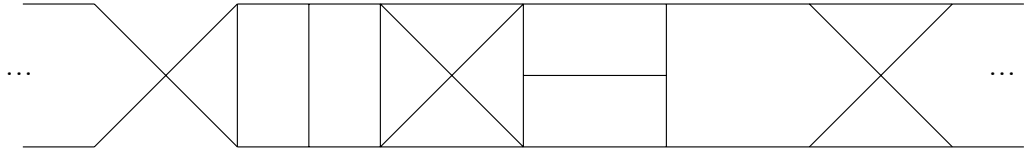}
\caption{Original network} 
\end{center}
\end{figure}

\begin{figure} \label{newnet}
\begin{center}
\includegraphics[width=4in]{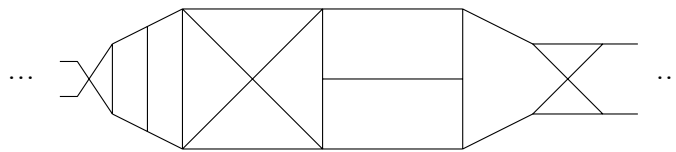}
\caption{Compressed network} 
\end{center}
\end{figure}

Figure 1 provides an illustration of a network equipped with edge weights $R$.
Put new weights $\rho (u,v) > 0$ on the edges of $\graph $, subject to the condition 
\[vol _{\rho} (\graph ) = \sum_{[u,v] \in \edgeset} \rho (u,v) < \infty ,\]
leading to a compressed network as shown in Figure 2.
By \propref{compact} and \thmref{weakconn} the completion $\graphbar _{\rho}$ of $\graph $ with respect to the
new metric is both weakly connected and compact. 
Each function $f \in \Balg $ has a unique continuous extension to a function $f:\graphbar _{\rho } \to \real $.
By \thmref{Stone} this extension map from $\Balg $ to $C(\graphbar _{\rho})$, the real continuous functions on $\graphbar _{\rho}$, 
is surjective, so identifies $\Balg $ with $C(\graphbar _{\rho})$.  It follows \cite[p. 210]{Lax} that 
$\graphbar _{\rho}$ is the maximal ideal space of $\Balg $.   Notice that the particular choice of $\rho $ was not important.
In summary, the next theorem holds. 

\begin{thm}
Suppose $\rho:\edgeset \to (0,\infty )$ is an edge weight function such that $\graphbar _{\rho}$ is compact and weakly connected,
which holds if $vol _{\rho}(\graph ) < \infty$.  Then the continuous extension of $f \in \Balg$ to $f \in C(\graphbar _{\rho }) $
is a surjective isometry of Banach algebras.
\end{thm}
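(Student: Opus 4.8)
The plan is to verify that the extension map $\Phi : \Balg \to C(\graphbar _{\rho})$, sending each $f \in \Balg$ to its unique continuous extension, is simultaneously a unital algebra homomorphism, an isometry, and a surjection; these three properties together yield the claimed surjective isometry of Banach algebras. The map is well defined by the earlier lemma guaranteeing a unique continuous extension of each $f \in \Balg$ to $\graphbar _{\rho}$, and the entire argument rests on the density of $\vertexset$ in $\graphbar _{\rho}$ together with the Stone--Weierstrass density result of \thmref{Stone}. The hypotheses are supplied either directly or, when $vol _{\rho}(\graph) < \infty$, through \propref{compact} and \thmref{weakconn}.

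First I would check that $\Phi$ respects the algebraic structure. The pointwise operations of addition, scalar multiplication, and multiplication, along with the constant function $1$, agree on $\vertexset$ whether computed in $\Balg$ or in $C(\graphbar _{\rho})$. Since $\vertexset$ is dense in $\graphbar _{\rho}$ and all the functions involved are continuous, the identities $\Phi(f+g) = \Phi(f) + \Phi(g)$, $\Phi(fg) = \Phi(f)\Phi(g)$, and $\Phi(1) = 1$ propagate from $\vertexset$ to all of $\graphbar _{\rho}$ by continuity. Thus $\Phi$ is a unital algebra homomorphism.

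Next comes the isometry. For $f \in \Balg$, the continuity of $\Phi(f)$ and the density of $\vertexset$ force
\[\sup_{x \in \graphbar _{\rho}} |\Phi(f)(x)| = \sup_{v \in \vertexset} |f(v)| = \| f \|_{\infty},\]
so $\Phi$ preserves norms and is in particular injective. The surjectivity is where the earlier work pays off. Because $\Phi$ is an isometry and $\Balg$ is complete, the image $\Phi(\Balg)$ is a complete, hence closed, subspace of $C(\graphbar _{\rho})$. By \thmref{Stone} the functions in $\alg$ are uniformly dense in $C(\graphbar _{\rho})$; since $\alg \subset \Balg$, the dense set $\Phi(\alg)$ lies inside the closed subspace $\Phi(\Balg)$, and this forces $\Phi(\Balg) = C(\graphbar _{\rho})$.

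I expect no single step to present a serious obstacle, since the genuinely hard content---compactness, weak connectedness, and Stone--Weierstrass density---has already been established in the preceding theorems. The one point demanding care is the interplay in the surjectivity argument: one must observe that an isometric image of a complete space is closed, so that the density of $\Phi(\alg)$ can be upgraded to the equality $\Phi(\Balg) = C(\graphbar _{\rho})$ rather than leaving it as mere density.
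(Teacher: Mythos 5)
Your proposal is correct and follows essentially the same route as the paper, which likewise derives surjectivity from the uniform density of $\alg$ in $C(\graphbar _{\rho})$ (\thmref{Stone}) together with the unique continuous extension of elements of $\Balg$. The only difference is that you explicitly supply the step the paper leaves implicit --- that the isometric image of the complete space $\Balg$ is closed, so density of $\Phi(\alg)$ upgrades to surjectivity --- which is a worthwhile clarification but not a different argument.
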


\subsection{More invariant subspaces for $\Delta $}

The Banach algebra $\Balg $ often has a rich collection of  closed ideals which are invariant subspaces for $\Delta $.
Select edge weights $\rho $ satisfying $vol _{\rho}(\graph ) < \infty$.
Define the boundary of $\graphbar _{\rho }$ by
\begin{equation} \label{bndry}
\partial  \graphbar _{\rho} = \graphbar _{\rho} \setminus \vertexset .
\end{equation}
Given a closed set $\Omega \subset \graphbar _\rho $, let 
\begin{equation} \label{ideal}
\Balg _{\Omega } = \{ f \in \Balg , f(x) = 0 {\ \rm for \ all \ } x \in \Omega \}
\end{equation}
denote the closed ideal of functions in $\Balg $ which vanish on $\Omega $. 
The following refinement of  \thmref{invariance} holds when $\Omega \subset \partial \graphbar _{\rho }$. 

\begin{thm} \label{invariance2} 
Assume that \eqref{lpbnd} holds, and 
$\Omega $ is a nonempty closed subset of $\partial \graphbar _{\rho }$.  The ideal $\Balg _{\Omega }$ 
is an invariant subspace for $\Delta $.  If $\Omega _0 \not= \Omega $ is another nonempty closed subset of $\partial \graphbar _{\rho }$,
then $\Balg _{\Omega _0} \not= \Balg _{\Omega }$. 
\end{thm}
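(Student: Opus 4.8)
The plan is to treat the two assertions separately: the invariance of $\Balg_{\Omega}$ rests on the principle that diffusion disappears at the boundary, while the injectivity of the map $\Omega \mapsto \Balg_{\Omega}$ rests on the separation results already established for $\graphbar_{\rho}$.

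For the invariance, the crucial observation is that $\Delta f$ vanishes on the \emph{entire} boundary $\partial \graphbar_{\rho}$ for every $f \in \Balg$, not merely on $\Omega$. First I would verify this for $g \in \alg$: as noted in the proof of \thmref{invariance}, $\Delta g$ is supported on a finite set of vertices, since $g$ changes value across only finitely many edges and hence $\Delta g(v) \neq 0$ only at the endpoints of those edges. A finitely supported function on $\vertexset$ extends continuously to $\graphbar_{\rho}$ by the value $0$ on $\partial \graphbar_{\rho}$: any boundary point $x$ is a limit of a Cauchy sequence of vertices, each vertex can occur only finitely often in such a sequence (otherwise $x$ would equal that vertex), so the sequence eventually leaves every finite subset of $\vertexset$, forcing the continuous extension of a finitely supported function to be $0$ at $x$. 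Passing to general $f \in \Balg$, I would choose $g \in \alg$ with $\| f - g \|_{\infty} < \epsilon$ and combine the boundedness of $\Delta$ on $\Balg$ from \thmref{invariance} with the isometry identifying $\Balg$ with $C(\graphbar_{\rho})$: for any $x \in \partial \graphbar_{\rho}$ one has $|\Delta f(x)| = |\Delta f(x) - \Delta g(x)| \le \| \Delta \|_{\infty}\, \epsilon$, and letting $\epsilon \to 0$ gives $\Delta f(x) = 0$.

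With this in hand the invariance is immediate. Given $f \in \Balg_{\Omega}$, \thmref{invariance} gives $\Delta f \in \Balg$, and the boundary-vanishing observation gives $\Delta f = 0$ on all of $\partial \graphbar_{\rho}$, hence on $\Omega \subset \partial \graphbar_{\rho}$; thus $\Delta f \in \Balg_{\Omega}$. Notice that $f$ itself need not vanish on $\Omega$ for this conclusion; the hypothesis $\Omega \subset \partial \graphbar_{\rho}$ is exactly what is used, and is precisely what would fail were $\Omega$ permitted to contain vertices. Since $\Balg_{\Omega}$ is closed and $\Delta$ is bounded, the semigroup $\exp(-t\Delta)$ then preserves $\Balg_{\Omega}$ as well. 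For the injectivity, suppose $\Omega_0 \neq \Omega$ are nonempty closed subsets of $\partial \graphbar_{\rho}$; after relabeling I may assume there is a point $x \in \Omega \setminus \Omega_0$. Then $\{ x \}$ and $\Omega_0$ are disjoint nonempty compact subsets of $\graphbar_{\rho}$, which is weakly connected by \thmref{weakconn}, so \corref{sepsets} furnishes a function $f \in \alg \subset \Balg$ with $f(x) = 1$ and $f \equiv 0$ on $\Omega_0$. This $f$ lies in $\Balg_{\Omega_0}$ but, since $x \in \Omega$ and $f(x) = 1 \neq 0$, not in $\Balg_{\Omega}$, whence $\Balg_{\Omega_0} \neq \Balg_{\Omega}$.

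The only genuinely delicate point is the boundary-vanishing lemma underpinning the invariance; once it is in place, both conclusions follow quickly. Its verification is elementary but relies on using correctly that $\vertexset$ carries the discrete topology under $d_{\rho}$ and is dense in its completion $\graphbar_{\rho}$, so that finitely supported functions are exactly those whose continuous extensions vanish on $\partial \graphbar_{\rho}$.
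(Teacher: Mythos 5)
Your proof is correct, and for the invariance half it takes a genuinely different route from the paper. The paper first shows that $\alg \cap \Balg _{\Omega }$ is dense in $\Balg _{\Omega }$ --- this is the technical heart of its argument, requiring the Urysohn-type function $\phi $ from \corref{sepsets} to cut a generic approximant $g \in \alg $ down to $\phi g \in \alg \cap \Balg _{\Omega }$ with $\| f - \phi g \| _{\infty } < 3\epsilon $ --- and only then observes that $\Delta $ maps $\alg \cap \Balg _{\Omega }$ into $\Balg _{\Omega }$ (because $\Delta g$ is finitely supported) and extends by boundedness. You instead prove the stronger statement that $\Delta $ maps all of $\Balg $ into $\Balg _{\partial \graphbar _{\rho }}$, combining the finite support of $\Delta g$ for $g \in \alg $ with a direct $\epsilon $-approximation at each boundary point; the invariance of every $\Balg _{\Omega }$ with $\Omega \subset \partial \graphbar _{\rho }$ then drops out at once, and, as you observe, the hypothesis $f|_{\Omega } = 0$ is never needed for that step. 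Your route is shorter, avoids the $\phi g$ construction entirely, and isolates exactly where $\Omega \subset \partial \graphbar _{\rho }$ enters (it is what makes finitely supported functions vanish on $\Omega $). What it does not deliver is the density of $\alg _{\Omega } = \alg \cap \Balg _{\Omega }$ in $\Balg _{\Omega }$, which the paper's proof establishes as a byproduct and explicitly reuses later when defining the operators $S_{\Omega }$ on the domain $\alg _{\Omega }$; if you adopt your argument, that density statement would need a separate proof where it is invoked. The separation half of your argument coincides with the paper's, up to swapping the roles of $\Omega $ and $\Omega _0$, and both rely on the standing assumption $vol_{\rho }(\graph ) < \infty $ so that $\graphbar _{\rho }$ is compact and weakly connected and \corref{sepsets} applies.
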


\begin{proof}

Suppose $f \in \Balg _{\Omega }$.
For any $\epsilon > 0$ there is a $g \in \alg $ with $\| f-g \| _{\infty } < \epsilon $.
Let 
\[ \Omega _1 = \{ x \in \graphbar _{\rho } , \ |f(x) | \ge \epsilon \} .\]
Since $\Omega _1$ is closed and $\graphbar _{\rho }$ is compact, $\Omega _1$ is compact.
The sets $\Omega $ and $\Omega _1$ are disjoint, so  by \corref{sepsets} there is a $\phi \in \alg $
with $0 \le \phi \le 1$, $\phi (x) = 1$ for $x \in \Omega _1$, and  $\phi (y) = 0$ for $y \in \Omega $.
The function $\phi g \in \alg $, and for all $x \in \graphbar _{\rho }$ either $\phi g = g$,
or $|f(x)| < \epsilon $ and $|\phi (x) g(x)| \le |g(x)| \le 2\epsilon $.  Thus 
$ \| f - \phi g \| _{\infty} <  3 \epsilon $, so $f$ may be uniformly approximated by functions 
$g \in \alg \cap \Balg _{\Omega }$.

If $g \in \alg \cap \Balg _{\Omega }$, then $\Delta g $ is zero at all but a finite set of vertices.
But $\Omega \subset \partial \graphbar _{\rho }$,  so $g(x) = 0$ for all $x \in \Omega $.
This shows that $\Delta : \alg \cap \Balg _{\Omega} \to \Balg _{\Omega }$.
Since $\Delta $ is bounded on $l^{\infty }(\graph )$, while $ \alg \cap \Balg _{\Omega} $ is uniformly dense in $\Balg _{\Omega }$,
it follows that $\Delta : \Balg _{\Omega} \to \Balg _{\Omega }$.

If $\Omega _0 \not= \Omega $ is another closed subset of $\partial \graphbar _{\rho }$, then there is a point $y \in \Omega _0$ such 
that $y \notin \Omega $.  By \corref{sepsets} there is a $\phi \in \Balg _{\Omega } $ with $\phi (y) = 1$, so 
$\Balg _{\Omega _0} \not= \Balg _{\Omega }$. 
\end{proof}

\section{Population Models on Networks}

\subsection{Semilinear equations}

Before treating problems exemplified by \eqref{demodel}, which may arise in models for nonlinear population dynamics,
some basic material for evolution equations in a Banach space $X$ of the form 
\begin{equation} \label{demodel2}
\frac{dp}{dt}  + Ap =  J(t,p), \quad p(0) = p_0.
\end{equation}
will be reviewed.  The operator $-A :X \to X$ is assumed to be the generator of a strongly
continuous semigroup $\sg (t)$ on $X$.   
While the semigroups considered so far have had bounded generators,
cases with unbounded generators $A$ will be encountered shortly.
The remarks below generally follow the treatment in  \cite[pp. 183--205]{Pazy}.

Sufficiently strong assumptions on $J$ lead to a satisfactory global existence theory for \eqref{demodel2}.
The function $J: [0,t_1] \times X \to X$ is assumed continuous.
In addition, suppose $J$ satisfies the uniform Lipschitz condition 
\begin{equation} \label{Lipschitz}
 \| J(t,f) - J(t,g) \| _X \le C_1 \| f - g \| _X , \quad 0 \le t \le t_1,
 \end{equation}
with the constant $C_1$ independent of $t,f,g$. 

For $t_1 > 0$,  a function $p:[0,t_1) \to X$ is a classical solution of \eqref{demodel2} on $[0,t_1)$ if
(i) $p$ is continuously differentiable with $p(t)$ in the domain of $A$ for $0 < t < t_1$,
(ii) $p$ is continuous on $[0,t_1)$, with $p(0) = p_0$, and
(iii) \eqref{demodel2} is satisfied for $0 < t < t_1$.   A classical solution $p$ of \eqref{demodel2} will
satisfy \cite[p. 183]{Pazy} the integral equation
\begin{equation} \label{solnform}
p(t) = \sg (t)p_0 + \int_0^t \sg (t-s)J(s,p(s)) \ ds .
\end{equation}
Solutions of \eqref{solnform} are called mild solutions of \eqref{demodel2}.

With the given hypotheses, the usual iteration method demonstrates that for each $p_0 \in X$ the equation \eqref{demodel2} 
has a unique mild solution $p(t)$ on $[0,t_1]$, and the mapping from $p_0$ to $p(t)$ is Lipschitz continuous from $X$ 
to the continuous functions with values in $X$ with the norm $\| p \|_{\infty} = \sup_{0 \le t \le t_1} \| p(t) \| _X$. 
If in addition $J$ is continuously differentiable, then $p$ is a classical solution of \eqref{demodel2} whenever
$p_0$ is in the domain of $A$.  Additional material, including local existence theorems are in \cite{Pazy}. 

\subsection{Spatial Asymptotics for Population Models}

To handle our population models on networks, additional assumptions are added to the above discussion of abstract semilinear
evolution equations.    The Banach space $X$ will be $\Balg ^d$ for some integer $d > 0$, with
\[\| f \| _X = \max_{1 \le i \le d} \| f_i \| _{\infty}, \quad f = [f_1,\dots ,f_d].\]
The operator $A$ will have the form
\[A = \begin{pmatrix} \Delta _1 & 0 & 0 & \dots & 0 \cr
0 & \Delta _2 & 0 & \dots & 0 \cr
0 &  \vdots & \vdots & \dots & 0 \cr
0 & 0 & 0 & \dots & \Delta _d 
\end{pmatrix} .\]
Each $\Delta _i$ has the form \eqref{Lapop}; the edge and vertex weights may vary with $i$, although they are assumed to 
yield bounded operators $\Delta _i$ defined on the same graph.  
Using vector functions and operators $\Delta _i$ acting diagonally is a minor change, so the previous notation
for the semigroup $\sg (t)$, and the generator $-\Delta = {\rm diag}[-\Delta _1,\dots ,-\Delta _d]$ is maintained. 

Since $\sg _i(t):\Balg \to \Balg $ by \thmref{invariance}, solutions $p(t)$ of  \eqref{solnform} are continuous $\Balg ^d$ 
valued functions.   
The elements of our Banach space are $d$-tuples of continuous functions, which may be evaluated at points $x \in \graphbar _{\rho}$.
Say that the function $J(t,p)$ is determined pointwise if for each $v \in \vertexset $ there is a function $J_v:[0,t_1] \times \real ^d \to \real ^d$ such that 
\begin{equation} \label{Jform}
J(t,p(t))(v) = J_v(t,p(t)(v)), \quad v \in \vertexset .
\end{equation} 
An example is provided by a logistic model varying with both time and vertex,
\[ J_v(t,u) = u(1 - u/K_v(t) ).\]  
Say that a function $J(t,p)$, which is determined pointwise, is eventually constant if the set of edges $[u,v] \in \edgeset $ 
such that $J_u \not= J_v$ is a finite set, independent of $t$.   As in \lemref{finran}, if $\graph $ is connected then there are only finitely many
distinct functions $J_v(t,u)$. 
With these restrictions on $J$, the next result shows that for $x \in \partial  \graphbar _{\rho}$ (recall \eqref{bndry}) 
the value of solutions $p(t,x)$ of \eqref{demodel2} (obtained by continuous extension from $\graph $)
are simply obtained by solving the corresponding ordinary differential equation.

\begin{thm} \label{spatasy}
Assume $J:[0,\infty) \times \Balg ^d \to \Balg ^d$ is continuous for $t \ge 0$ and satisfies the Lipschitz condition \eqref{Lipschitz}.
In addition, suppose $J$ is determined pointwise and eventually constant.
For $p_0 \in \Balg ^d$, assume $p(t)$ is a solution of \eqref{solnform} for $0 \le t \le t_1$.
If $x \in \partial  \graphbar _{\rho} $, and $q(t)$ solves the initial value problem 
\begin{equation} \label{demodel0}
\frac{dq}{dt} = J_x(t,q), \quad q(0,x) = p_0(x),
\end{equation} 
then $p(t,x) = q(t,x)$ for $0 \le t \le t_1$.
\end{thm}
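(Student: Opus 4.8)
The plan is to isolate the principle that \emph{diffusion is invisible at infinity} into a single lemma and then feed it into the mild solution formula \eqref{solnform}. The lemma I would prove is: for every $f \in \Balg$ and every boundary point $x \in \partial \graphbar_{\rho}$ one has $\Delta f(x) = 0$, and consequently $\sg(t)f(x) = f(x)$ for all $t \ge 0$. To establish the first assertion, note that if $g \in \alg$ then $\Delta g$ is nonzero at only finitely many vertices, so its continuous extension to $\graphbar_{\rho}$ must vanish on $\partial \graphbar_{\rho} = \graphbar_{\rho} \setminus \vertexset$, since each boundary point is a limit of vertices on which $\Delta g$ is eventually $0$. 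Because $\Delta$ is bounded on $\Balg$ with $\alg$ dense there, and because evaluation at $x$ is a bounded functional on $\Balg \cong C(\graphbar_{\rho})$, the identity $\Delta f(x) = 0$ passes to all $f \in \Balg$ by approximation. Since each $\Delta_i$ is bounded, $\sg(t) = \exp(-t\Delta)$ is given by a norm-convergent power series; as $\Delta^{n-1}f \in \Balg$ by \thmref{invariance}, the lemma's first part gives $\Delta^n f(x) = \Delta(\Delta^{n-1}f)(x) = 0$ for $n \ge 1$, so evaluation at $x$ leaves only the $n=0$ term and $\sg(t)f(x) = f(x)$.

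Next I would evaluate the mild solution \eqref{solnform} at $x$, working componentwise in $\Balg^d$ since the generator is block diagonal. As evaluation at $x$ commutes with the Bochner integral, this yields
\[ p(t,x) = \sg(t)p_0(x) + \int_0^t \sg(t-s) J(s,p(s))(x) \, ds. \]
Applying the lemma to each of the $d$ components collapses both terms: $\sg(t)p_0(x) = p_0(x)$, and, since $J(s,p(s)) \in \Balg^d$ by hypothesis, the integrand reduces to $J(s,p(s))(x)$, giving $p(t,x) = p_0(x) + \int_0^t J(s,p(s))(x)\, ds$, where $s \mapsto p(s,x)$ is continuous because $p$ is $\Balg^d$-valued and continuous.

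The conceptual crux is the lemma above, but the most delicate technical step will be identifying the boundary value $J(s,p(s))(x)$ with the ODE nonlinearity $J_x(s,p(s,x))$. Because $J$ is eventually constant, the assignment $v \mapsto J_v$ changes across only a finite edge set $W$, so $U_W(x)$ is a clopen neighborhood of $x$ on which $J_v$ is a single function $J_x$; this is exactly the $J_x$ appearing in \eqref{demodel0}. Taking vertices $v_n \to x$, the determined-pointwise property \eqref{Jform} gives $J(s,p(s))(v_n) = J_{v_n}(s,p(s)(v_n)) = J_x(s,p(s)(v_n))$ once $v_n \in U_W(x)$. Testing the Lipschitz condition \eqref{Lipschitz} against constant functions forces each $J_v$ to be Lipschitz in its second argument uniformly in $v$, so $J_x$ is continuous there; combined with continuity of $p(s)$ at $x$, passing to the limit yields $J(s,p(s))(x) = J_x(s,p(s,x))$. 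Substituting leaves $p(t,x) = p_0(x) + \int_0^t J_x(s,p(s,x))\, ds$, the integral form of \eqref{demodel0} with $P(t) = p(t,x)$. Since $J_x$ is Lipschitz in its second variable this initial value problem has a unique solution, whence $p(t,x) = q(t,x)$ for $0 \le t \le t_1$.
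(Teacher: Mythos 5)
Your proof is correct, but it takes a genuinely different --- and considerably shorter --- route than the paper's. Your key lemma, that evaluation at a boundary point $x \in \partial\graphbar_{\rho}$ annihilates $\Delta(\Balg)$ so that $\sg(t)f(x)=f(x)$ for every $f\in\Balg$, is sound: for $g\in\alg$ the function $\Delta g$ is supported on finitely many vertices, a finite vertex set has positive $d_{\rho}$-distance from any boundary point, so the extension of $\Delta g$ vanishes at $x$; boundedness of $\Delta$ on $l^{\infty}$ and density of $\alg$ give $\Delta f(x)=0$ for all $f\in\Balg$, and the norm-convergent power series for $\sg(t)$ then leaves only the $n=0$ term at $x$. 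Applying the evaluation functional to \eqref{solnform} and identifying $J(s,p(s))(x)$ with $J_x(s,p(s,x))$ then reduces everything to uniqueness for a scalar integral equation; the only spot needing an extra line is your assertion that $v\mapsto J_v$ is a single function on $U_W(x)$, which follows because any two vertices of $U_W(x)$ can be joined by a path avoiding $W$ (concatenating near-$x$ segments as in \lemref{cloplem}). The paper instead approximates $\sg(t)$ by truncated power series $\sg_k(t)$, runs parallel Picard iterations for \eqref{solnform} and \eqref{demodel0}, uses the locality of $\Delta$ to show the iterates $P_{k,m}$ and $q_m$ agree outside finite subgraphs $\Xi_m$, and closes with Gronwall's inequality; that argument yields the extra quantitative information that $p(t,v)$ and $q(t,v)$ are uniformly $\epsilon$-close on all vertices outside a finite set, not merely equal at ideal boundary points, at the cost of much more bookkeeping. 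Both arguments rely essentially on the boundedness of the $\Delta_i$; yours isolates the mechanism more transparently, turning \emph{diffusion is invisible at infinity} into the literal identity $\Delta f(x)=0$.
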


\begin{proof}

Recall, as in the paragraph following \eqref{solnform}, that  the mapping from $p_0$ to $p(t)$ is Lipschitz continuous from $X$ 
to the continuous functions with values in $X$ with the norm $\| p \|_{\infty} = \sup_{0 \le t \le t_1} \| p(t) \| _X$. 
Since $\alg $ is dense in $\Balg $, the result is valid for all $p_0 \in \Balg ^d$ if it holds for all $p_0 \in \alg ^d$.

The equation \eqref{demodel0} is equivalent to the integral equation
\[q(t,v) = q(0,v) + \int_0^t J_v(s,q(s,v)) \ ds ,\]
a system of decoupled evolutions on the vertices of $\graph $, or by
continuous extension on $\graphbar _{\rho} $.
This equation and \eqref{solnform} may be solved by the usual iteration schemes
\begin{equation} \label{iter0}
q_0(t) = p_0, \quad q_{n+1}(t) = p_0 + \int_0^t J(s,q_n(s)) \ ds ,
\end{equation}
and 
\begin{equation} \label{iter1}
p_0(t) = p_0,
\end{equation}
\[ p_{n+1}(t) = \sg (t)p_0 + \int_0^t \sg (t-s)J(s,p_n(s)) \ ds .\]
The hypotheses on $J$ imply that the sequences generated by both schemes \eqref{iter0} and \eqref{iter1}
converge uniformly in $C([0,t_1],\Balg )$ to solutions $q(t), p(t)$ of the corresponding integral equations.  That is, given any $\epsilon _1 > 0$, there is an  
$N$ such that 
\[ \| q(t) - q_n(t) \| _{X} < \epsilon _1, \quad  \| p(t) - p_n(t) \| _{X} < \epsilon _1, \quad n \ge N , \quad 0 \le t \le t_1.\]

Let $\sg _k(t)$ denote the operator valued function defined by the truncated series
\[ \sg _k(t) = \sum_{n=0}^{k} (-t\Delta )^n/n!. \]
Then for any $\epsilon _2 > 0$ there is a $K$ such that 
\[ \| \sg (t) - \sg _k(t) \| _{X} = \| \sg (t) - \sum_{n=0}^{k} (-t\Delta )^n/n!\| _{X} < \epsilon _2, \quad k \ge K, \quad 0 \le t \le t_1.\]
For each $k$, define a new sequence $P_{k,n}:[0,t_1] \to X $ by 
\begin{equation} \label{iter2}
P_{k,0}(t) = p_0,
\end{equation}
\[ P_{k,n+1}(t) = \sg _k(t)p_0 + \int_0^t \sg _k(t-s)J(s,P_{k,n}(s)) \ ds .\]
On the interval $0 \le t \le t_1$ this sequence will also converge uniformly to a continuous $X$-valued function $P_k(t)$.

The difference between $p(t)$ and $P_k(t)$ will satisfy
\[ p(t) - P_k(t)  =  [\sg (t) - \sg _k(t)]p_0 \]
\[+ \int_0^t \sg _k (t-s)[J(s,p(s)) - J(s,P_k(s))] \ ds \]
\[+  \int_0^t [\sg (t-s) - \sg _k(t-s)]J(s,p(s)) \ ds . \]
The Lipschitz condition on $J$ and the uniform convergence of $\sg _k(t)$ to $\sg (t)$ for $0 \le t \le t_1$
imply that for $k$ sufficiently large,
\[ \| p(t) - P_{k}(t) \| _X  \le \epsilon + C \int_0^t \| p(s) - P_{k}(s) \| _X \ ds  , \quad 0 \le t \le t_1,\]
with the constant $C$ independent of $k$.  
By Gronwall's inequality \cite[p. 24]{Hartman}
\begin{equation} \label{pPk}
\| p(t) - P_{k}(t) \| _X \le \epsilon \exp(Ct) , \quad 0 \le t \le t_1,
\end{equation}
so $P_k(t)$ converges uniformly to $p(t)$ for $0 \le t \le t_1$ as $k \to \infty $.

Given an $\epsilon > 0$ pick $k$ sufficiently large that $\| P_k(t) - p(t) \| _X < \epsilon $,
and then pick $n$ sufficiently large that $\| P_{k,n}(t) - P_k(t) \| _X <  \epsilon $ 
and $\| q_n(t) - q(t) \| < \epsilon $, these estimates all valid for $0 \le t \le t_1$.
The difference between the first iterates $P_{k,1}(t)$ and $q_1(t)$ is 
\[P_{k,1}(t) - q_1(t) = [S_k(t) - I]p_0 + \int_0^t [S_k(t-s) - I] J(s,p_0) \ ds ,\]
and in general the difference of iterates is
\[P_{k,m+1}(t) - q_{m+1}(t) = [S_k(t) - I]p_0 + \int_0^t S_k(t-s)J(s,P_{k,m}(s)) - J(s,q_m(s)) \ ds .\]

Because $J$ is determined pointwise and eventually constant, and $p_0 \in \alg $,
there is a finite subgraph $\Xi _0$ of $\graph $ such that $J(s,p_0)$ 
is constant on connected components of $\graph \setminus \Xi _0$.
Because $\Delta $ is local, the definition of $S_k(t)$ implies that 
$[S_k(t-s) - I] J(s,p_0)(v) = 0$ for vertices $v$ whose combinatorial distance from $\Xi _0$ is greater than $k$.
Thus there is a finite subgraph $\Xi _1$ such that $P_{k,1}(t,v) = q_1(t,v)$ for $v \in \graph \setminus \Xi _1$.
The form of $P_{k,m+1}(t) - q_{m+1}(t)$ and induction then show there is a finite subgraph $\Xi _{m+1}$ such that 
$P_{k,m+1}(t,v) = q_{m+1}(t,v)$ for $v \in \graph \setminus \Xi _{m+1}$.

The earlier convergence observations now imply that 
for any $\epsilon > 0$ and any $p_0 \in \alg $, there is a finite set $\Xi _{\epsilon } $ such that 
\[\| p(t,v) - q(t,v) \| < \epsilon , \quad v \notin \Xi _{\epsilon } , \quad 0 \le t \le t_1 \]
and $q(t,v)$ is independent of $v$ on connected components of $\graph \setminus \Xi _{\epsilon }$.
For $x \in \partial \graphbar _{\rho} $,  let $v_m \in \vertexset $ be any sequence converging to $x$ in the $\rho $ metric.
The continuity of $p(t)$ on $\graphbar _{\rho}$ means $p(t,x) = \lim _{v_n \to x} p(t,v_n) = q(t,x)$,
finishing the proof.
\end{proof}

\subsection{Accelerated diffusion models}

\thmref{spatasy} shows that when the problem \eqref{solnform} is solved in $\Balg $, diffusive effects 'disappear at $\infty $' .
This suggests that effective model simplifications may be achieved by modifying $\Delta $ to increase the 
rate of diffusion in remote parts of $\graph $.  Such a modification can be realized by replacing the distant edge weights $\rest (u,v)$ 
with a new set $\rho (u,v)$ satisfying $\sum_{[u,v] \in \edgeset } \rho (u,v) < \infty $.   A related change of vertex weights 
will also be made.  The effect will be to approximate the semigroup generated by $\Delta $ with a 
semigroup generated by an unbounded operator $\Delta _{\rho}$ with compact resolvent.
(The notation hides the choice of vertex weights.)  The operators $\Delta _{\rho}$ can be selected
to respect the invariant subspaces for $\Delta $ identified in \thmref{invariance2}, while 
their eigenfunctions can provide a finite dimensional approximation for the evolution described by \eqref{solnform} on $\graph $.

\subsubsection{A Sobolev space on $\graph $}

The condition \eqref{lpbnd} will now be relaxed so that unbounded operators $\Delta _{\rho}$ with finite volume edge weights may be treated.
Note that \propref{bform} did not require \eqref{lpbnd}.
When $\Delta _{\rho}$ is unbounded it may have many self-adjoint realizations as an operator on $l^2(\graph )$.
A variety of self-adjoint realizations will be constructed using 'Dirichlet' and 'Neumann' conditions on subsets of $\partial \graphbar _{\rho} $.
The next result considers continuous extension of functions to $\graphbar $ 
when the quadratic form of \propref{bform} is finite. 
 
\begin{thm} \label{formcont}
Suppose $\graph $ is connected. 
Using the metric of \eqref{metricdef}, 
functions $f:\vertexset \to \real$ with $B(f,f) < \infty $
are uniformly continuous on $\graph $, and 
so $f$ extends uniquely to a continuous function on $\graphbar _{\rest}$.   
\end{thm}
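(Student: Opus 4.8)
The plan is to prove uniform continuity of $f$ with respect to $d_{\rest}$ directly; the claimed continuous extension to the completion $\graphbar_{\rest}$ then follows from the standard extension theorem for uniformly continuous functions \cite[p. 149]{Royden}, exactly as in the lemma showing that members of $\Balg$ extend to $\graphbar_{\rho}$. First I would dispose of the degenerate case: if $B(f,f) = 0$ then every edge term $\cond(u,v)(f(u)-f(v))^2$ vanishes, so $f$ is constant on the connected graph $\graph$ and is trivially uniformly continuous. Henceforth assume $B(f,f) > 0$.

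The heart of the argument is an energy estimate along paths via Cauchy--Schwarz. Given vertices $u,v$ and any path $\gamma = (v_0,\dots,v_K)$ from $u$ to $v$, I would telescope and factor each edge term as
\[
|f(u) - f(v)| \le \sum_{k=0}^{K-1} |f(v_k) - f(v_{k+1})|
= \sum_{k=0}^{K-1} \frac{|f(v_k)-f(v_{k+1})|}{\sqrt{\rest(v_k,v_{k+1})}}\,\sqrt{\rest(v_k,v_{k+1})}.
\]
Cauchy--Schwarz then splits the right side into an energy factor $\bigl(\sum_k \cond(v_k,v_{k+1})(f(v_k)-f(v_{k+1}))^2\bigr)^{1/2}$ and a length factor $\bigl(\sum_k \rest(v_k,v_{k+1})\bigr)^{1/2}$, the latter being the $\rest$-length of $\gamma$. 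The point I would emphasize is that when $\gamma$ is a \emph{simple} path its edges are distinct, so the summands in the energy factor are a subcollection of the nonnegative terms in the expansion $B(f,f) = \sum_{[u,v]\in\edgeset}\cond(u,v)(f(u)-f(v))^2$ (obtained from \eqref{bformdef} since each undirected edge is counted twice, cancelling the factor $\tfrac12$). Hence the energy factor is bounded by $\sqrt{B(f,f)}$, giving $|f(u)-f(v)| \le \sqrt{B(f,f)}\,\sqrt{\mathrm{length}(\gamma)}$ for every simple path $\gamma$.

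To pass from a single path to the metric, given $\delta > 0$ and vertices with $d_{\rest}(u,v) < \delta$, I would use the definition \eqref{metricdef} to pick a path from $u$ to $v$ of $\rest$-length less than $\delta$, then delete repeated vertices to reduce it to a simple path of length no larger. Substituting into the bound yields $|f(u)-f(v)| \le \sqrt{B(f,f)}\,\sqrt{\delta}$. Therefore, given $\epsilon > 0$, choosing $\delta = \epsilon^2/B(f,f)$ forces $|f(u)-f(v)| < \epsilon$ whenever $d_{\rest}(u,v) < \delta$, which is precisely uniform continuity with a vertex-independent modulus.

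The step I expect to demand the most care is the reduction to a simple path together with the bookkeeping guaranteeing that each edge of $\gamma$ contributes at most once to the energy factor; this is exactly what allows the single finite number $B(f,f)$ to serve as a uniform constant controlling oscillation on every geodesically short segment. The infimum in \eqref{metricdef} causes no trouble, since it suffices to work with a near-geodesic path of length below $\delta$ rather than an attained minimizer, and the concluding extension to $\graphbar_{\rest}$ is routine once uniform continuity has been established.
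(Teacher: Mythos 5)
Your proposal is correct and follows essentially the same route as the paper's proof: telescoping $f$ along a near-geodesic simple path, applying Cauchy--Schwarz to split the increment into an energy factor controlled by $B(f,f)$ and a length factor controlled by $d_{\rest}(u,v)$, and then invoking the standard extension theorem for uniformly continuous functions. The only differences are cosmetic (you track the edge double-counting to get the constant $B(f,f)$ where the paper settles for $2B(f,f)$, and you treat the case $B(f,f)=0$ separately).
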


\begin{proof}
If $v,w \in \vertexset  $ and $\gamma = (v=v_0,v_1,\dots ,v_K = w) $ 
is any finite simple path from $v$ to $w$, then the Cauchy-Schwarz inequality gives
\[|f(w) - f(v)|^2 
 = \Big | \sum_{k} [f(v_{k+1}) - f(v_{k})] 
\frac{C^{1/2}(v_{k+1},v_k)}{C^{1/2}(v_{k+1},v_k)} \Big |^2 \]
\[ \le \sum_{k} [C(v_{k+1},v_k)(f(v_{k+1}) - f(v_{k}))^2] 
\sum_k R(v_{k+1},v_k) \]
\[ \le 2B(f,f) \sum_k R(v_{k+1},v_k). \]
There is a simple path with $\sum_k R(v_{k+1},v_k) \le 2d(v,w)$, so
\begin{equation} \label{formest}
|f(w) - f(v)|^2 \le 4B(f,f)d(v,w), 
\end{equation}
which shows $f$ is uniformly continuous on $\graph $.
By \cite[p. 149]{Royden} $f$ extends continuously to $\graphbar _{\rest} $.   

\end{proof}

For a given set of vertex weights, the bilinear form may be used to define a 'Sobolev style'
Hilbert space $H^1(\wt )$ with inner product 
\[\langle f,g \rangle _{1} = \sum_v f(v)g(v) \wt (v) + B(f,g).\]
Recall that the $\domain _K$ consists of functions $f:\vertexset \to \real $ which are $0$ 
at all but finitely many vertices.  Let $H^1_0$ be the closure of $\domain _K$ in $H^1(\wt)$.

To insure that all functions $f \in \alg $, including the constant function $f =1$,   
are in $H^1(\wt )$ it is necessary to have $\sum_v \wt (v) < \infty $.
If $\rho $ is a finite volume edge weight function, a possible choice is 
to take the vertex weight $\wt _0(v)$ to be half the sum 
of the lengths of the incident edges,
\[\wt _0(v) = \frac{1}{2}\sum_{u \sim v} R(u,v) .\]
This choice makes the vertex measure consistent with the previously defined
graph volume,
\[\wt _0(\graph ) = \sum_{e \in \edgeset} R(e) = {\rm vol (\graph )} .\]
The corresponding Laplacian
\[\Delta _{\rho }f(v) = \wt _0^{-1}(v)\sum_{u \sim v} C(u,v) (f(v) - f(u))
= \frac{2}{\sum_{u \sim v} R(u,v)} \sum_{u \sim v} \frac{f(v) - f(u)}{R(u,v)}\]
resembles the symmetric second difference operator from 
numerical analysis.   The vertex weight $\wt _0$ is typically distinct from 
$\wt (v) = \sum_{u \sim v} C(u,v)$, a choice 
which appears in the study of discrete time Markov chains
\cite[p. 40]{Doyle}, \cite[p. 73]{Liggett}, \cite[p. 18]{Lyons}. 

\begin{lem} \label{h1lem}
If $\graph $ is connected with finite diameter, then there is a constant $C$
such that 
\begin{equation} \label{conteval}
\sup _{v \in \vertexset} |f(v)| \le C\| f \| _{1},
\end{equation}
so a Cauchy sequence in $H^1(\wt )$ is a uniform Cauchy sequence.   The functions
$f$ in the unit ball of $H^1$ are uniformly equicontinuous \cite[p. 29]{Royden}.
\end{lem}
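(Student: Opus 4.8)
The plan is to derive \eqref{conteval} by marrying two facts: the oscillation estimate \eqref{formest} of \thmref{formcont}, which controls $|f(w)-f(v)|$ by $B(f,f)$ and the geodesic distance $d(v,w)$, and the $l^2$ part of the $H^1$ norm, $\|f\|_1^2 = \sum_v f(v)^2\wt(v) + B(f,f)$, which pins down the value of $f$ at a single vertex. Since \eqref{formest} only bounds differences, the extra ingredient needed is a pointwise bound at one reference point, and the finite-diameter hypothesis is exactly what promotes the local oscillation control to a uniform global bound.

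First I would fix any reference vertex $r\in\vertexset$. Because every term in $\sum_v f(v)^2\wt(v)$ is nonnegative and the whole sum is at most $\|f\|_1^2$, the single term at $r$ yields $\wt(r)f(r)^2 \le \|f\|_1^2$, so $|f(r)| \le \wt(r)^{-1/2}\|f\|_1$. Letting $D$ be the (finite) diameter of $\graph$ in the geodesic metric, I would then apply \eqref{formest} with $d(r,v)\le D$ and $B(f,f)\le\|f\|_1^2$ to obtain $|f(v)-f(r)| \le 2\sqrt{D}\,\|f\|_1$ for every $v$. Combining the two bounds and taking the supremum over $v$ gives \eqref{conteval} with the explicit constant $C = \wt(r)^{-1/2} + 2\sqrt{D}$.

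The two consequences then follow at once. Applying \eqref{conteval} to the difference $f_n-f_m$ of elements of an $H^1(\wt)$-Cauchy sequence gives $\sup_v |f_n(v)-f_m(v)| \le C\|f_n-f_m\|_1$, so such a sequence is uniformly Cauchy. For equicontinuity, if $\|f\|_1\le 1$ then $B(f,f)\le 1$, whence \eqref{formest} supplies the $f$-independent modulus $|f(w)-f(v)| \le 2\sqrt{d(w,v)}$; since each $f$ extends continuously to $\graphbar_\rest$ by \thmref{formcont}, this modulus persists on the completion, yielding uniform equicontinuity of the unit ball.

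I do not anticipate a serious obstacle: the argument rests entirely on \thmref{formcont}, and the only point demanding any thought is recognizing that the oscillation estimate must be anchored by the $l^2$ term at a fixed vertex, with finite diameter ensuring that the resulting constant is uniform over the graph.
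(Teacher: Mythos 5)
Your proof is correct and follows essentially the same route as the paper: the paper also fixes a reference vertex $v_0$, bounds $|f(v_0)|$ by $\|f\|_1/\sqrt{\wt(v_0)}$ using the $l^2$ part of the norm, and then applies \eqref{formest} with the finite diameter to control $|f(v)-f(v_0)|$, obtaining the same constant. The deductions of uniform Cauchyness and equicontinuity are likewise identical.
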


\begin{proof}
Fixing a vertex $v_0$, \eqref{formest} gives
\[|f(v)| \le |f(v_0)| + |f(v) - f(v_0)|  \le
\| f \| _{1} /\sqrt{\wt (v_0)} + 2\| f \| _{1} {\rm diam}(\graph )^{1/2},\]
which is \eqref{conteval}.  The uniform equicontinuity follows  
from \eqref{formest}.
\end{proof}

\begin{thm} \label{zerobnd}
Suppose $\graph $ is connected and has finite diameter.
If $f \in H^1_0(\wt )$, then $f$ has a unique continuous extension to $\graphbar _{\rest} $
which is zero at all points $x \in \partial \graphbar _{\rest} $. 
\end{thm}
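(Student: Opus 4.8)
The plan is to exploit the very definition of $H^1_0(\wt )$ as the $H^1$-closure of the finitely supported functions $\domain _K$, and to transport the vanishing-on-the-boundary property through a uniform limit of functions that manifestly have it. First I would fix $f \in H^1_0(\wt )$ and choose a sequence $g_n \in \domain _K$ with $\| g_n - f \| _1 \to 0$. The key leverage is \lemref{h1lem}: the estimate $\sup _v |g_n(v) - f(v)| \le C \| g_n - f \| _1$ upgrades the $H^1$-convergence to uniform convergence of $g_n$ to $f$ on $\vertexset $, so in particular $(g_n)$ is uniformly Cauchy on $\vertexset $ and $f$ is bounded.

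Next I would check that each approximant already has the target structure. Since $g_n$ is supported on a finite vertex set $F_n$ and $\graph $ is locally finite, only finitely many edges are incident to $F_n$, so $B(g_n,g_n) < \infty $. Hence \thmref{formcont} applies and $g_n$ extends uniquely to a continuous function $\tilde g_n$ on $\graphbar _{\rest }$. Because $\vertexset $ is dense in its completion $\graphbar _{\rest }$, continuity yields $\sup _{\graphbar _{\rest }} |\tilde g_n - \tilde g_m| = \sup _{\vertexset } |g_n - g_m|$, so $(\tilde g_n)$ is uniformly Cauchy on $\graphbar _{\rest }$; its uniform limit is a bounded continuous function $\tilde f$ on $\graphbar _{\rest }$ that agrees with $f$ on $\vertexset $, and density forces this continuous extension to be unique.

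The crucial point is that each $\tilde g_n$ vanishes on $\partial \graphbar _{\rest }$. Given $x \in \partial \graphbar _{\rest } = \graphbar _{\rest } \setminus \vertexset $, choose vertices $v_k \to x$ in the metric $d_{\rest }$. If $v_k$ belonged to the finite support $F_n$ for infinitely many $k$, then some fixed $w \in F_n$ would satisfy $v_k = w$ along a subsequence, giving $d_{\rest }(w,x) = 0$ and hence $x = w \in \vertexset $, a contradiction. Thus $v_k \notin F_n$ for all large $k$, so $\tilde g_n(x) = \lim _k g_n(v_k) = 0$. Passing to the uniform limit, $\tilde f(x) = \lim _n \tilde g_n(x) = 0$ for every $x \in \partial \graphbar _{\rest }$, which is exactly the assertion. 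I expect the only delicate observation to be that a boundary point cannot be approached through any fixed finite set of vertices; once that is isolated, every remaining ingredient is delivered directly by \lemref{h1lem} (uniform control from the $H^1$ norm) and \thmref{formcont} (continuous extension of finite-energy functions).
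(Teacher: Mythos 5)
Your proposal is correct and follows essentially the same route as the paper: approximate $f$ in $H^1(\wt)$ by finitely supported $g_n$, use Lemma~\ref{h1lem} to upgrade to uniform convergence, extend via Theorem~\ref{formcont}, and observe that the extensions of finitely supported functions vanish on $\partial\graphbar_{\rest}$ before passing to the uniform limit. The only cosmetic difference is that you construct the continuous extension of $f$ as the uniform limit of the extensions $\tilde g_n$, whereas the paper obtains it directly from Theorem~\ref{formcont} applied to $f$ itself (which has $B(f,f)<\infty$); your added detail on why a boundary point cannot be approached through a fixed finite support is a point the paper leaves implicit.
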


\begin{proof}
Any function $f \in H^1_0(\wt )$ is the limit in $H^1(\wt )$ of a 
sequence $f_n$ from $\domain _K$.  The functions $f$ and $f_n$ have unique continuous 
extensions to $\graphbar _{\rest}$ by \thmref{formcont}.  The extended functions $f_n$ 
satisfying $f_n(x) = 0$ for all $x \in \partial \graphbar _{\rest} $. 
By \lemref{h1lem} the sequence $f_n$ converges to $f$ uniformly on $\graph $, 
so the extensions $f_n$ converge uniformly to the extension $f$ on $\graphbar _{\rest}$.  
Thus $f(x) = 0$ for all $x \in \partial \graphbar _{\rest}  $. 
\end{proof}

Let $S_{K}$ denote the operator 
$\Delta _{\rho}$ on $l^2$ with the domain $\domain _K$, the set of real-valued functions   
with finite support.

\begin{prop} \label{Lapdef}
The operator $S_{K}$ is symmetric and nonnegative on $l^2$.
The adjoint operator $S_{K}^*$ on $l^2$ acts by
\[(S_{K}^*h)(v) = \Delta _{\rho }h(v) = \frac{1}{\wt (v)} \sum_{u \sim v} C(u,v) (h(v) - h(u))\]
on the domain consisting of all $h \in l^2$ for which 
$\Delta _{\rho }h \in l^2$.  
\end{prop}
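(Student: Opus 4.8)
The operator $S_K = \Delta_\rho$ on $l^2$ with domain $\domain_K$ is symmetric and nonnegative, and its adjoint $S_K^*$ acts by the same formula $\Delta_\rho$ on the domain $\{h \in l^2 : \Delta_\rho h \in l^2\}$.

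The plan is to split the claim into three pieces and dispatch each in turn. \emph{Symmetry and nonnegativity} come almost immediately from \propref{bform}: for $f,g \in \domain_K$ the formula \eqref{symmetry} gives $\langle S_K f, g\rangle = B(f,g) = \langle f, S_K g\rangle$, and the quadratic form $B(f,f) \ge 0$ gives $\langle S_K f, f\rangle \ge 0$. The only point needing care is that \propref{bform} was stated for $f,g \in \domain_\alg = \alg \cap l^2$; since $\domain_K \subset \domain_\alg$ (any finitely supported function lies in $\alg$, as it is eventually $0$, and lies in $l^2$ trivially), the identity applies directly. Note also that $S_K$ is densely defined because $\domain_K$ is dense in $l^2$, so the adjoint $S_K^*$ is well defined.

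The substantive part is identifying $S_K^*$ explicitly. First I would establish the \textbf{inclusion} that any $h$ with $h, \Delta_\rho h \in l^2$ belongs to $\text{dom}(S_K^*)$ and that $S_K^* h = \Delta_\rho h$. For such $h$ and any $f \in \domain_K$, I want $\langle S_K f, h\rangle = \langle f, \Delta_\rho h\rangle$. Since $f$ has finite support, the sum $\langle S_K f, h\rangle = \sum_v \wt(v) (\Delta_\rho f)(v) \overline{h(v)}$ is a \emph{finite} sum, and I can apply the same edge-reindexing computation used in the proof of \propref{bform} (moving the $C(u,v)$ differences from $f$ onto $h$) without any convergence worries, because only finitely many vertices $v$ and their finitely many neighbors are involved. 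This yields $\langle S_K f, h\rangle = \sum_v \wt(v) f(v)\, \overline{(\Delta_\rho h)(v)} = \langle f, \Delta_\rho h\rangle$, so $h \in \text{dom}(S_K^*)$ with the stated action.

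For the \textbf{reverse inclusion} I would take $h \in \text{dom}(S_K^*)$, so there is some $k = S_K^* h \in l^2$ with $\langle S_K f, h\rangle = \langle f, k\rangle$ for all $f \in \domain_K$. Testing against $f = 1_w$ for each vertex $w$ forces $k(w)$ to equal the value obtained by the same edge-reindexing as above, namely $(\Delta_\rho h)(w)$; concretely, $\langle S_K 1_w, h\rangle$ expands into a finite expression in $h(w)$ and its neighbors that equals $\wt(w)\,\overline{(\Delta_\rho h)(w)}$, while $\langle 1_w, k\rangle = \wt(w)\,\overline{k(w)}$. Hence $k = \Delta_\rho h$ pointwise, and since $k \in l^2$ we get $\Delta_\rho h \in l^2$, placing $h$ in the claimed domain. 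The step I expect to require the most attention is the convergence bookkeeping in the reverse direction: I must confirm that the raw formula $\Delta_\rho h$ is well defined pointwise for $h \in l^2$ (it is, since each vertex has finitely many neighbors by local finiteness, so $(\Delta_\rho h)(w)$ is a finite sum regardless of integrability), and that the test-function argument genuinely pins down every coordinate of $k$ rather than merely a dense subspace of relations. Because the $1_w$ span $\domain_K$ and the pairing is coordinatewise with respect to the weighted basis, this is clean, but it is the place where local finiteness of $\graph$ is doing essential work and should be invoked explicitly.
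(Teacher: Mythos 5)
Your proposal is correct and follows essentially the same route as the paper: symmetry and nonnegativity are read off from \propref{bform}, and the adjoint is identified by testing against (scalar multiples of) the vertex delta functions, with local finiteness guaranteeing all sums are finite. You are somewhat more complete than the paper, which only writes out the computation pinning down $S_K^*h$ pointwise and leaves the forward inclusion (that $\Delta_\rho h \in l^2$ implies $h \in \mathrm{dom}(S_K^*)$) implicit in the same finite rearrangement.
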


\begin{proof}
The symmetry and nonnegativity of $S_{K}$ are given by \eqref{symmetry}.  
Since $S_{K}$ is densely defined, $S_{K}^*$ is the operator
whose graph is the set of pairs 
$(h,k) \in l^2 \oplus l^2$ 
such that
\[\langle S_{K}f,h \rangle =  \langle f,k \rangle  \]
for all $f \in \domain _K$.  Suppose $f_v = \frac{1}{\wt (v)}\delta _v$
Then for any $h$ in the domain of $S_{K}^*$,
\[k(v) = (S_{K}^*h)(v) = \langle S_{K}f_v,h \rangle  
 = \sum _w [\sum_{u \sim w} C(u,w) (f_v(w) - f_v(u))]h(w)\] 
\[ = \frac{1}{\wt (v)} \sum_{u \sim v} C(u,v) (h(v) - h(u)).\]

\end{proof}

\propref{Lapdef} provides a basic Laplace operator,
the Friedrich's extension \cite[pp. 322-326]{Kato} of $S_{K}$,
whose domain is a subset of $H^1_0(\wt)$ the closure of 
$\domain _K$ in $H^1(\wt)$.
Let $\Delta _D$ denote the Friedrich's extension of $S_{K}$.
Several features of $\Delta _D$ are implied by the condition
$\wt (\graph ) < \infty $.  

\begin{prop} \label{lbnd}
Suppose $\graph $ is connected, with finite diameter and infinitely many vertices. 
If $\wt (\graph ) < \infty $, $f \in {\rm domain}(\Delta _K)$, and
$\| f \|  = 1$,  
then $\Delta _D$ has the strictly positive lower bound
\begin{equation}
\langle \Delta _D f,f \rangle  = B(f,f) \ge \frac{1}{4\wt (\graph ) {\rm diam} (\graph)}, 
\end{equation}
\end{prop}

\begin{proof}
The Friedrich's extension $\Delta _{K,}$ of the nonnegative symmetric operator $S_{K}$
has the same lower bound, so it suffices to consider functions $f \in \domain _K$.
Since $\| f \|  = 1$ there must be some vertex $v$ where $f^2(v) \ge \wt ^{-1}(\graph )$.
Since $f$ has finite support, there is another vertex $u$ with $f(u) = 0$.
An application of \eqref{formest} gives
\[\wt ^{-1}(\graph ) \le f^2(v) = [f(v) - f(u)]^2 \le 4B(f,f)d(u,v).\]
\end{proof}

\begin{prop}
Suppose $\graph $ is connected, $\graphbar _{\rest} $ is compact, and $\wt (\graph )$ is finite.
Let $S_1$ be a symmetric extension of $S_K$ in $l^2$ 
whose associated quadratic form is 
\[ \langle S_1 f,f \rangle  = B(f,f).\]
Then the Friedrich's extension $\Delta _1$ of $S_1$ has compact resolvent.
\end{prop}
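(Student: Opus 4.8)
The plan is to reduce the statement to the compactness of a Sobolev-style embedding and then establish that embedding by an Arzel\`a--Ascoli argument on the compactification $\graphbar_{\rest}$. Since the form of $S_1$ is $B(f,f) \ge 0$ by \propref{bform}, the Friedrich's extension $\Delta_1$ is a nonnegative self-adjoint operator whose form domain $\mathcal Q$ is the closure of $\mathrm{domain}(S_1)$ in the form norm $\langle f,f\rangle_1 = \|f\|_2^2 + B(f,f)$; this form norm is exactly the $H^1(\wt)$ norm, so $\mathcal Q$ is a closed subspace of $H^1(\wt)$. Because $\Delta_1 \ge 0$, the point $-1$ lies in its resolvent set, and for $g \in l^2$ and $u = (\Delta_1+1)^{-1}g \in \mathrm{domain}(\Delta_1)$ one has
\[ \langle u,u\rangle_1 = B(u,u) + \|u\|_2^2 = \langle (\Delta_1+1)u,u\rangle = \langle g,u\rangle \le \|g\|_2\,\|u\|_2 \le \|g\|_2\,\langle u,u\rangle_1^{1/2}, \]
so $\langle u,u\rangle_1^{1/2} \le \|g\|_2$; that is, $(\Delta_1+1)^{-1}$ maps $l^2(\wt)$ boundedly into $\mathcal Q \subset H^1(\wt)$. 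Consequently $(\Delta_1+1)^{-1}$, as an operator on $l^2(\wt)$, factors as a compact inclusion composed with a bounded map, and it will be compact as soon as the inclusion $H^1(\wt)\hookrightarrow l^2(\wt)$ is compact.

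It therefore remains to show that $H^1(\wt) \hookrightarrow l^2(\wt)$ is compact. First I would note that the compactness of $\graphbar_{\rest}$ forces $\graph$ to have finite diameter, since a compact metric space is bounded; this makes \lemref{h1lem} applicable. Let $(f_n)$ be a sequence in the unit ball of $H^1(\wt)$. By \lemref{h1lem} the $f_n$ are uniformly bounded, $\sup_v |f_n(v)| \le C$, and by \eqref{formest} they satisfy the uniform H\"older bound $|f_n(w)-f_n(v)|^2 \le 4\,d_{\rest}(v,w)$ on $(\vertexset, d_{\rest})$, so the family is uniformly equicontinuous.

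Next I would pass to the compactification. Since $B(f_n,f_n) < \infty$, \thmref{formcont} extends each $f_n$ to a continuous function on $\graphbar_{\rest}$, and the estimate \eqref{formest}, being preserved in the limit, shows that the extended family is uniformly bounded and uniformly equicontinuous on the compact metric space $\graphbar_{\rest}$. Arzel\`a--Ascoli then produces a subsequence converging uniformly on $\graphbar_{\rest}$, hence uniformly on $\vertexset$. Finally, uniform convergence upgrades to $l^2(\wt)$ convergence because $\wt(\graph) < \infty$: for any two functions one has $\|f-g\|_2^2 \le \bigl(\sup_v |f(v)-g(v)|\bigr)^2 \wt(\graph)$, so a uniformly Cauchy sequence is Cauchy in $l^2(\wt)$. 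This exhibits a convergent subsequence in $l^2(\wt)$, proves the inclusion compact, and hence shows $(\Delta_1+1)^{-1}$ is compact.

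The step I expect to require the most care is the transfer of the uniform equicontinuity from $\graph$ to the completion $\graphbar_{\rest}$ so that Arzel\`a--Ascoli may be invoked on a genuinely compact domain: one must check that the H\"older modulus from \eqref{formest} survives the continuous extension of \thmref{formcont} and controls oscillations across the boundary points of $\graphbar_{\rest}$, which is exactly where the hypotheses that $\graphbar_{\rest}$ is compact and $\graph$ is connected enter. The remaining reduction---that a bounded map into a compactly embedded form domain yields a compact resolvent---is routine once the energy estimate above is in hand.
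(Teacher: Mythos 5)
Your proposal is correct and follows essentially the same route as the paper: a bounded map from $l^2(\wt)$ into $H^1(\wt)$ via the resolvent energy estimate, followed by \lemref{h1lem} and Arzel\`a--Ascoli to extract a uniformly convergent subsequence, which converges in $l^2(\wt)$ because $\wt(\graph)<\infty$. The paper's proof is just a terser version of the same argument, asserting without computation the boundedness of the resolvent into $H^1(\wt)$ that you verify explicitly.
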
 

\begin{proof}
The resolvent of $\dop _{1,\wt}$ maps a bounded set in $l^2$ into a bounded set
in $H^1(\wt )$. Suppose $f_n$ is a bounded sequence in $l^2$,
with $g_n = (\dop _1 - \lambda I)^{-1}f_n$.  
By \lemref{h1lem} and the Arzela-Ascoli Theorem \cite[p. 169]{Royden} the sequence $g_n$
has a uniformly convergent subsequence, which converges in $l^2$.
\end{proof}

\subsubsection{Boundary conditions and operators}

In this section 'Dirichlet' (absorbing) and 'Neumann' (reflecting) boundary conditions are used   
to construct nonnegative self adjoint extensions of $S_{K}$.  
The constructed operators extend to semigroup generators
which are positivity preserving contractions on $l^{\infty}$.
Assume that finite volume edge weights $\rho $ and finite measure vertex weights $\mu $
are given.
Given a closed set $\Omega \subset \partial \graphbar _{\rho} $, 
let $\alg _{\Omega }$ denote the
subalgebra of $\alg $ vanishing on $\Omega $.    It was noted in the proof of \thmref{invariance2}
that $\alg _{\Omega }$ is dense in $\Balg _{\Omega }$. 
Let $S_{\Omega }$ denote the operator with domain $\alg _{\Omega }$
acting on $l^2$ by $S_{\Omega }f = \Delta _{\rho }f$. 

By \propref{bform} the operator $S_{\Omega }$ is nonnegative and symmetric, 
with quadratic form $\langle S_{\Omega } f,f \rangle  = B(f,f)$.   
Let $\Delta _{\Omega }$ denote the Friedrich's extension of $S_{\Omega }$,
and note that the domain of $\Delta _{\Omega }$ is a subset of $H^1(\wt )$.
A slight modification of the proof of \thmref{zerobnd} shows that every function $f$
in the domain of $\Delta _{\Omega }$ extends continuously to $\graphbar $
with $f(x) = 0$ for $x \in \Omega $.   As noted in the proof of \thmref{invariance2},
if $\Omega _1 $ and $\Omega _2$ are distinct nonempty closed subsets of $\partial \graphbar _{\rho}$,
there are functions in $\alg _{\Omega _1}$ which are not in $\alg _{\Omega _2}$,
so the operators  $\Delta _{\Omega _1}$ and $\Delta _{\Omega _2}$ have different domains.

Since the operators $\Delta _{\Omega }$ are nonnegative and self-adjoint on $l^2(\wt )$, 
they generate  $l^2(\wt)$ contraction semigroups $\exp (-t\Delta _{\Omega })$. 
Dirichlet form methods  \cite[p. 20]{DaviesHKST} provide additional information
when two conditions are satisfied.  If 
$Quad(\Delta _{\Omega } )$ denotes the domain of $\Delta _{\Omega }^{1/2}$, 
the first condition is that $ f \in Quad (\Delta _{\Omega } ) $ implies 
$|f| \in Quad (\Delta _{\Omega } )$ and $B(|f|,|f|) \le B(f,f)$.
Since the form is
\[B(f,f) = \frac{1}{2}\sum_{v \in \vertexset} \sum_{u \sim v} C(u,v)(f(v) - f(u))^2,\]
the first condition holds for $f \in \alg _{\Omega }$.  If $f \in Quad (\Delta _{\Omega } )$ then
there is a sequence $f_n \in \alg _{\Omega }$ with
\[ \langle f, f \rangle  + B(f,f) = \lim_{n \to \infty} 
 \langle f_n, f_n \rangle  + B(f_n,f_n).\]
It follows that 
\[ \langle |f|, |f| \rangle  + B(|f|,|f|) = \lim_{n \to \infty} 
 \langle |f_n|, |f_n| \rangle  + B(|f_n|,|f_n|),\]
and $B(|f|,|f|) \le B(f,f)$.

The second condition is that if $f \in Quad(\Delta _{\Omega } )$ and $g \in l^2(\wt)$
with $|g(v)| \le |f(v)|$ and $|g(v) - g(u)| \le |f(v) - f(u)|$ for all $u,v \in \vertexset$,
then $g \in Quad(\Delta _{\Omega })$ and $Q(g) \le Q(f)$. This is even more transparent than the first
condition.  Again quoting \cite[p. 12-13]{DaviesHKST}, the following result is established.

\begin{thm}
For $t \ge 0$ the semigroups $\exp (-\Delta _{\Omega }t)$ on $l^2$ 
are positivity preserving $l^p$ contractions for $1 \le p \le \infty $ 
\end{thm}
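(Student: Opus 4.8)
The plan is to identify each $\Delta_\Omega$ as the generator of a symmetric \emph{Dirichlet form} and then invoke the standard correspondence between Dirichlet forms and sub-Markovian semigroups. The two displayed properties established just above the statement are exactly the Beurling--Deny type hypotheses needed: the first, that $f \in Quad(\Delta_\Omega)$ implies $|f| \in Quad(\Delta_\Omega)$ with $B(|f|,|f|) \le B(f,f)$, says that the map $f \mapsto |f|$ contracts the form, and the second, that every normal contraction $g$ of $f$ lies in $Quad(\Delta_\Omega)$ with $Q(g) \le Q(f)$, says that normal contractions operate on the form. Since both have already been verified for the closed form $B$ on $Quad(\Delta_\Omega)$, the analytic substance of the theorem is in place, and what remains is to read off the stated conclusions from the general theory \cite[p. 12--13]{DaviesHKST}.

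Granting the Dirichlet form structure, I would first extract positivity preservation from the first condition: a symmetric closed nonnegative form for which $f \mapsto |f|$ is a form contraction generates a positivity preserving semigroup, so $f \ge 0$ forces $\exp(-t\Delta_\Omega)f \ge 0$ for all $t \ge 0$. I would then obtain the $l^p$ contraction bounds in three steps. Because $\Delta_\Omega$ is nonnegative and self-adjoint on $l^2(\wt)$, the semigroup is automatically an $l^2$ contraction. The second condition, that normal contractions operate, is precisely what forces the semigroup to be sub-Markovian; together with positivity this yields contraction on $l^\infty$. Self-adjointness on $l^2(\wt)$ then gives contraction on $l^1$ by duality with respect to the pairing $\langle f,g\rangle = \sum_v f(v)g(v)\wt(v)$. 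Finally the Riesz--Thorin interpolation theorem \cite[p. 3]{DaviesHKST}, applied exactly as in the proof of \propref{heatprop}, interpolates the $l^1$ and $l^\infty$ bounds to deliver contraction on every $l^p$, $1 \le p \le \infty$.

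The genuine difficulty lies not in the present theorem but in the two structural conditions feeding it, and those are settled in the preceding paragraphs. The one point demanding care is that $Quad(\Delta_\Omega)$ is the domain of $\Delta_\Omega^{1/2}$, strictly larger than the domain of $\Delta_\Omega$ and than $\alg_\Omega$; the two conditions are transparent for $f \in \alg_\Omega$ but must be propagated to all of $Quad(\Delta_\Omega)$ by the approximation argument already indicated, taking $f_n \in \alg_\Omega$ with $\langle f_n,f_n\rangle + B(f_n,f_n) \to \langle f,f\rangle + B(f,f)$ and passing $|f_n| \to |f|$ and $Q(g_n) \le Q(f_n)$ to the limit. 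Once this is in place the theorem is a direct application of the Dirichlet form machinery together with the interpolation step used earlier in the paper.
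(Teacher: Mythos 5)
Your proposal is correct and follows essentially the same route as the paper: the paper's argument consists precisely of verifying the two Beurling--Deny type conditions on $Quad(\Delta_\Omega)$ (first for $f \in \alg_\Omega$, then for general $f$ by the same approximation you describe) and then citing the general Dirichlet form theory in Davies \cite[pp. 12--13, 20]{DaviesHKST} for the conclusion. Your additional unwinding of how that theory delivers the result --- positivity from the first condition, sub-Markovianity and the $l^\infty$ bound from the second, $l^1$ by self-adjoint duality, and Riesz--Thorin for the intermediate $p$ --- is a faithful expansion of what the cited reference supplies and matches the interpolation step already used in Proposition~\ref{heatprop}.
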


\subsubsection{Strong convergence}

Assume given a set of edge and vertex weights $\rest , \nu $ satisfying \eqref{lpbnd}, and a set of finite volume edge weights $\rho $,
and vertex weights $\wt $ with $\wt (\graph ) < \infty $.
Pick a vertex $r$ and define edge weights $\rest _n$ with 
\[ \rest _n(u,v) = \Bigl \{ \begin{matrix} \rest (u,v), & \max(d_c(r,u),d_c(r,v)) \le n \cr
 \rho (u,v), & {\rm otherwise} \end{matrix} \Bigr \} \]
 and with vertex weights
  \[ \wt _n(v) = \Bigl \{ \begin{matrix} \nu (v), & d_c(r,v) \le n \cr
 \wt (v), & {\rm otherwise} \end{matrix} \Bigr \}. \] 
 Pick a closed set $\Omega \subset \partial \graphbar _{\rho} $.
Let $\sg (t)$ denote the $\Balg $ semigroup generated by $\Delta $, and let 
$\sg _n(t)$ denote the $l^{\infty} $ semigroup generated by $\Delta _n = \Delta _{\Omega ,n}$, whose
coefficients are determined by $\rest _n$ and $\wt _n$.

\begin{thm} \label{stronglin}
Fix $t_1 > 0$.  For any $f \in \Balg _{\Omega }$,
\begin{equation} \label{converge}
\lim_{n \to \infty } \| \sg (t)f - \sg _n(t)f \| _{\infty} = 0
\end{equation}
uniformly for $0 \le t \le t_1$.
\end{thm}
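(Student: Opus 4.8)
The plan is to compare the two semigroups through a variation-of-parameters identity anchored at the \emph{bounded} generator $\Delta$, and then to show that the error is pushed out into the far part of $\graph$, where the data $\sg(s)f$ is nearly spatially constant.

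First I would reduce to $f \in \alg_{\Omega}$. Both $\sg(t)$ and $\sg_n(t)$ are contractions on $l^{\infty}$ (by \propref{heatprop} and the contraction statement for the Friedrich's extensions $\Delta_{\Omega}$), and $\alg_{\Omega}$ is uniformly dense in $\Balg_{\Omega}$ (as recorded in the proof of \thmref{invariance2}), so it suffices to prove \eqref{converge} for $f \in \alg_{\Omega}$. For such $f$ I would differentiate $s \mapsto \sg_n(t-s)\sg(s)f$ to obtain, at the level of the formal Laplacians (the membership $\sg(s)f \in \mathrm{dom}(\Delta_n)$ being subsumed in the far-field estimate below), the identity
\begin{equation} \label{duhamelstrong}
\sg(t)f - \sg_n(t)f = \int_0^t \sg_n(t-s)(\Delta_n - \Delta)\sg(s)f \ ds, \quad 0 \le t \le t_1.
\end{equation}
Since $\sg_n(t-s)$ is an $l^{\infty}$ contraction, the whole statement collapses to the single uniform estimate $\sup_{0\le s \le t_1}\|(\Delta_n - \Delta)\sg(s)f\|_{\infty} \to 0$ as $n \to \infty$.

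The next step is locality. By construction the edge and vertex weights of $\Delta_n$ agree with those of $\Delta$ at every vertex within combinatorial distance $n-1$ of $r$, so $\Delta_n g(v) = \Delta g(v)$ there and $(\Delta_n - \Delta)\sg(s)f$ is supported on the far set $\{v : d_c(r,v) \ge n\}$. For $f \in \alg_{\Omega}$ the function $\Delta f$ has finite support and $f$ is constant on each component of $\graph \setminus \Xi$ for some finite subgraph $\Xi$. Writing $c$ for the value of $f$ on the component containing a deep vertex $v$, the representation $\sg(s)f(v) = c + \sum_w \sg(s,v,w)(f(w)-c)\wt(w)$ together with the spatial decay of the heat kernel for the bounded operator $\Delta$ (equivalently, the finite propagation speed of $\sg(s)$, which makes the mass $\sum_{d_c(v,w)\ge k}\sg(s,v,w)\wt(w)$ super-exponentially small in $k$, uniformly for $0 \le s \le t_1$) shows that $|\sg(s)f(v)-c|$, and hence every edge-difference $|\sg(s)f(v) - \sg(s)f(u)|$ across deep edges, is bounded by a quantity decaying super-exponentially in $d_c(r,v)$, uniformly in $s$.

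With this flatness the two pieces of $(\Delta_n - \Delta)\sg(s)f$ on the far set are handled separately. For the $\Delta$-piece I would commute, $\Delta \sg(s)f = \sg(s)\Delta f$, and push the finitely supported $\Delta f$ by the finite-propagation semigroup $\sg(s)$, so that the heat-kernel decay makes $\|\Delta\sg(s)f\|_{\infty}$ super-exponentially small on $\{d_c(r,\cdot)\ge n\}$. For the $\Delta_n$-piece, on each far component I would subtract the local constant $c$ (which $\Delta_n$ annihilates) and bound $|\Delta_n \sg(s)f(v)|$ by the coefficient $\wt(v)^{-1}\sum_{u\sim v}\rho(u,v)^{-1}$ times the super-exponentially small edge-difference. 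This is where the main obstacle lies: the accelerated weights have $\rho(u,v)\to 0$, so these coefficients are \emph{unbounded} as $v$ recedes, and the estimate succeeds only because the factorial/Gaussian-type decay from the heat kernel dominates the growth of the conductances $\rho^{-1}$ along $\graph$. I expect verifying this domination --- that the super-exponential flatness of $\sg(s)f$ outpaces the at-most-exponential blow-up of the $\Delta_n$-coefficients in the accelerated models at hand --- to be the technical heart of the argument. Once it is in place, \eqref{duhamelstrong} and the $l^{\infty}$-contractivity of $\sg_n$ give $\|\sg(t)f - \sg_n(t)f\|_{\infty} \le t_1 \sup_{0\le s\le t_1}\|(\Delta_n-\Delta)\sg(s)f\|_{\infty} \to 0$, uniformly on $[0,t_1]$.
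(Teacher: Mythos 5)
There is a genuine gap, and it sits exactly where you flag it: the claim that the super-exponential spatial decay of the heat kernel of $\Delta$ dominates the growth of the coefficients of $\Delta_n$ on the far set. The only hypotheses on the accelerated weights are $\sum_{[u,v]\in\edgeset}\rho(u,v)<\infty$ and $\wt(\graph)<\infty$; these impose \emph{no} rate whatsoever on the decay of $\rho$ or $\wt$, hence no bound on the growth of the conductances $1/\rho(u,v)$ or of $1/\wt(v)$. For instance, taking $\rho=2^{-k}e^{-k^2}$ on the edges at combinatorial distance $k$ from $r$ keeps the volume finite while making $1/\rho \ge e^{k^2}$, which overwhelms the heat-kernel decay $\bigl[et\|\Delta\|_{\infty}/k\bigr]^{k}=e^{-k\log k+O(k)}$. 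So the pointwise bound $|\Delta_n\sg(s)f(v)|\le \wt_n(v)^{-1}\sum_{u\sim v}\rho(u,v)^{-1}\,|\sg(s)f(v)-\sg(s)f(u)|$ does not tend to zero in general, and your ``technical heart'' cannot be carried out without adding quantitative hypotheses on $\rho$ and $\wt$ that the theorem does not assume. A secondary, unresolved issue is that the Duhamel identity requires $\sg(s)f$ to lie in the domain of the unbounded Friedrichs extension $\Delta_n$, which is not established and is not obviously true.

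The paper's proof is built precisely to avoid ever applying $\Delta_n$ to $\sg(s)f$. It introduces an intermediate function $U_0(t)$ that equals $\sg(t)f$ on the $k$-neighborhood $V_k$ of the set where $f$ is non-flat and is \emph{exactly} equal to the local constants $\beta_j$ outside. Because $U_0$ is locally constant wherever the weights of $\Delta$ and $\Delta_n$ differ, one gets $\Delta_n U_0=\Delta U_0$ identically, with no estimate on the size of the $\Delta_n$-coefficients needed; the only quantitative input is that the residual $e_1=\partial_t U_0+\Delta U_0=(\partial_t+\Delta)(U_0-U_1)$ is uniformly small for $k$ large, which uses only the \emph{bounded} operator $\Delta$ and the near-flatness of $\sg(t)f$ far out. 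Writing $U_0$ by variation of parameters with respect to both semigroups and subtracting gives $\|\sg(t)f-\sg_n(t)f\|_{\infty}\le 2t_1\epsilon$. If you want to salvage your route, the fix is essentially to replace $\sg(s)f$ by such a locally constant modification before the generator difference is applied — which is the paper's argument.
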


\begin{proof}
Since $\sg (t)$ and $\sg _n(t)$ are semigroups of contractions on $l^{\infty }$
it suffices to prove the result for $f \in \alg _{\Omega} $.
Given a positive integer $k$, partition $\vertexset $ into a collection of sets of two types. 
The first type is the single set $V_k$ defined to be the set of vertices whose distance to the 
nearest vertex $v$ such that $f(v) \not= f(u)$ for $u \sim v$ is at most $k$.  
Recalling that $f \in \alg $ has finite range, let  $\{ \beta _1,\dots ,\beta _J \}$ be the range of $f$
when the domain is restricted to $\vertexset \setminus V_k$.
The second type consists of the $J$ sets 
\[B_j = f^{-1}(\beta _j) \cap (\vertexset \setminus V_k),\]
so that $f(v) = \beta _j$ for all $v \in B_j$.  

Letting $U_1(t) = \sg (t) f$, define 
\[ U_0(t,v) = \Bigl \{ \begin{matrix}
U_1(t,v), & v \in V_k, \cr
\beta _j, & v \in B_j, 
\end{matrix}
\Bigr \}\]
and
\[ e_1(t,v) = \frac{\partial U_0}{\partial t} + \Delta U_0 = \frac{\partial U_1}{\partial t} + \frac{\partial (U_0 - U_1)}{\partial t} 
+ \Delta U_1 + \Delta (U_0 - U_1)\] 
\[ =  \frac{\partial (U_0 - U_1)}{\partial t}  + \Delta (U_0 - U_1).\] 
Similarly, define 
\[ e_2(t,v) = \frac{\partial U_0}{\partial t} + \Delta _n U_0 .\] 
Given $\epsilon > 0$, the fact that $\Delta $ is bounded on $\Balg $ and $\sg (t)$ is given by a power series
shows, as in the proof of \thmref{invariance}, that for $k$ sufficiently large, $|e_1(t,v)| < \epsilon $ for all $v \in \vertexset$ and $0 \le t \le t_1$.

For the given set $V_k$, choose $n$ large enough that
$\rest _n (u,v) = \rest (u,v)$ for all vertex pairs $u,v$ 
such that $v$ is adjacent to some $w \in V_k$.
Then for each vertex $v \in \vertexset$, either  $R(u,v) = R_n(u,v)$ for all $u \sim v$,
or $U_0(u) = U_0(v)$ for all $u \sim v$.  In either case $\Delta U_0(v) = \Delta _nU_0(v)$, so
 $e_1(t,v) = e_2(t,v)$ for all $v \in \vertexset $.

The function $U_1(t)$ is continuous from $[0,t_1]$ to $\Balg $.  For each $v$, $U_0(t,v)$
is either constant, or agrees with $U_1(t,v)$, so $U_0(t)$ and $e_1(t,v)$ are also continuous.
Since $U_0(t)$ satisfies the initial value problem
\[ \frac{\partial U_0}{\partial t} + \Delta U_0 = e_1, \quad U_0(0) = f,\]
it is given by \cite[p. 106]{Pazy}
\[U_0(t) = \sg (t) f + \int_0^t \sg (t-s) e_1(s) \ ds .\]
Similarly, $U_0(t) \in \alg $ is a continuous function from $[0,t_1]$ to the domain of $\Delta _n$, so
\[U_0(t) = \sg _n(t) f + \int_0^t \sg _n(t-s) e_1(s) \ ds .\]

Both $\sg (t)$ and $\sg _n(t)$ are semigroups of contractions on $\Balg $, so
the earlier estimate $|e_1(t,v)| < \epsilon $ for all $v \in \vertexset$ and $0 \le t \le t_1$
gives the desired result.
 
\end{proof}

\subsubsection{Semilinear strong convergence}

The convergence result of \thmref{stronglin} can be extended to semilinear equations.
This type of extension seems to be known, but we have not found a convenient reference.

\begin{cor}
The conclusion of \thmref{stronglin} remains valid if $S(t)$ and $S_n(t)$ are the solution operators 
taking initial data in $\Balg ^d$ to the solutions of 
\begin{equation} \label{nonlinevo}
\frac{dp}{dt} + \Delta p = J(t,p), \quad p(0) = p_0, \quad \frac{dP}{dt} + \Delta _nP = J(t,P), \quad P(0) = p_0,
\end{equation}
where, as before, $J:[0,\infty) \times \Balg ^d \to \Balg ^d$ is continuous for $t \ge 0$, and Lipschitz continuous uniformly in $t$ on bounded intervals.
\end{cor}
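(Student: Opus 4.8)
The plan is to combine the mild (Duhamel) representations of the two solutions with the linear convergence of \thmref{stronglin} and a Gronwall estimate. Write $p(t) = S(t)p_0$ and $P(t) = S_n(t)p_0$ for the solutions of the two equations in \eqref{nonlinevo}. Each satisfies the integral equation \eqref{solnform} with its own semigroup, and subtracting, after inserting $\pm\, \sg_n(t-s)J(s,p(s))$ in the integrand, gives
\[ p(t)-P(t) = [\sg(t)-\sg_n(t)]p_0 + \int_0^t [\sg(t-s)-\sg_n(t-s)]J(s,p(s))\,ds + \int_0^t \sg_n(t-s)[J(s,p(s))-J(s,P(s))]\,ds. \]
The first two terms depend only on the \emph{fixed} trajectory $p$, while the last is controlled by the Lipschitz hypothesis \eqref{Lipschitz}.

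The first term tends to $0$ uniformly for $0 \le t \le t_1$ by \thmref{stronglin} applied to $p_0$. The second term is the crux: \thmref{stronglin} supplies convergence for each \emph{fixed} argument, whereas here $J(s,p(s))$ varies with $s$. I would upgrade to uniformity over the trajectory by compactness. The map $s \mapsto J(s,p(s))$ is continuous from $[0,t_1]$ into $\Balg_{\Omega}^d$, so its image $K$ is compact; covering $K$ by finitely many $\epsilon$-balls centered at $g_1,\dots,g_m \in K$ and applying \thmref{stronglin} to each $g_i$ produces a single $N$ with $\|[\sg(\tau)-\sg_n(\tau)]g_i\|_{\infty} < \epsilon$ for all $n \ge N$, $\tau \in [0,t_1]$, and $1 \le i \le m$. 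Since $\sg$ and $\sg_n$ are $l^{\infty}$ contractions, $\|\sg(\tau)-\sg_n(\tau)\|_{\infty} \le 2$, so every $J(s,p(s))$ lies within $\epsilon$ of some $g_i$ and
\[ \|[\sg(t-s)-\sg_n(t-s)]J(s,p(s))\|_{\infty} < 3\epsilon, \quad n \ge N, \quad 0 \le s,t \le t_1. \]
Hence the second term is at most $3\epsilon\, t_1$.

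Collecting these bounds yields a null sequence $\delta_n \to 0$ with
\[ \|p(t)-P(t)\|_{\infty} \le \delta_n + C_1 \int_0^t \|p(s)-P(s)\|_{\infty}\,ds, \quad 0 \le t \le t_1, \]
where the third term is estimated by combining \eqref{Lipschitz} with the contractivity of $\sg_n(t-s)$. Gronwall's inequality \cite[p. 24]{Hartman} then gives $\|p(t)-P(t)\|_{\infty} \le \delta_n e^{C_1 t_1}$ for $0 \le t \le t_1$, which is the asserted uniform convergence. The principal obstacle is exactly the passage from the fixed-data convergence of \thmref{stronglin} to convergence uniform along the solution trajectory under the integral sign; the finite-cover argument above is what legitimizes this. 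A secondary point, needed so that \thmref{stronglin} may be applied to $J(s,p(s))$, is that the trajectory remain in the ideal $\Balg_{\Omega}^d$; this holds whenever $J$ maps $\Balg_{\Omega}^d$ into itself, as in the logistic case where each $J_x$ vanishes at the origin for $x \in \Omega$.
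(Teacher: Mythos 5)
Your proposal is correct and follows essentially the same route as the paper: the same three-term decomposition of $p(t)-P(n,t)$, reduction of the middle term to finitely many fixed arguments (the paper uses a piecewise-constant-in-$s$ approximation via uniform continuity where you use an $\epsilon$-net of the compact image, which is the same idea), the Lipschitz-plus-contraction bound on the last term, and Gronwall. Your closing remark about needing the trajectory to stay in $\Balg_{\Omega}^d$ is a fair point that the paper glosses over by simply hypothesizing the strong convergence on all of $X$.
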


\begin{proof}
As above, the equation \eqref{nonlinevo} may be recast as the integral equation
\[ p(t) = \sg (t)p_0 + \int_0^t \sg (t-s)J(s,p(s)) \ ds ,\]
which may be solved by iteration.
As in \thmref{stronglin}, assume the sequence of strongly continuous contraction
semigroups $\sg _n(t)$ satisfies
\[ \lim_{n \to \infty } \| \sg (t)f - \sg _n(t)f \| _{\infty} = 0, \quad f \in X\]
uniformly for $0 \le t \le t_1$.  Let $P(n,t)$ be the sequence satisfying 
\[P(n,t) = \sg _n(t) p_0 + \int_0^t \sg _n(t-s) J(s,P(n,s)) \ ds.\]

Consider the difference
\[p(t) - P(n,t) = [\sg (t) - \sg _n(t)]p_0 + I, \]
\[ I = \int_0^t [\sg (t-s)J(s,p(s)) - \sg _n(t-s)J(s,P(n,s)) ] \ ds .\]
By assumption the difference $[\sg (t) - \sg _n(t)]p_0 $ has limit $0$ as $n \to \infty $.
Write $I = I_1 + I_2$, where 
\[ I_1 = \int_0^t [\sg (t-s)J(s,p(s)) - \sg _n(t-s)J(s,p(s)) ] \ ds, \]
\[ I_2 = \int_0^t [\sg _n(t-s)J(s,p(s)) - \sg _n(t-s)J(s,P(n,s)) ] \ ds .\]

Since the function $J(s,p(s))$ is continuous on $[0,t_1]$, for any $\epsilon > 0$ 
there is an $h > 0$ and a piecewise constant approximation
\[J(s,p(s)) \simeq J(s_m,p(s_m)), \quad s_m = mh , \quad s_m \le s \le s_m + h,\]
with 
\[ \| J(s,p(s)) - J(s_m,p(s_m)) \| < \epsilon , \quad s_m \le s \le s_m + h.\]
The strong convergence of $\sg _n(t)$ to $\sg (t)$ uniformly on $[0,t_1]$ then gives
$\lim _{n \to \infty} I_1 = 0$.

Similar to the argument in \cite[p. 184]{Pazy}, the Lipschitz  condition 
\[ \| J(t,f) - J(t,g) \| _X \le L \| f-g \| , \quad f,g \in X, \quad 0 \le t \le t_1,\]
and the fact that the semigroups $\sg _n (t)$ are contractions, means that
for any $\epsilon > 0$ and for $n$ sufficiently large,
\[ \| p(t) - P(n,t) \| \le \epsilon + \int_{0}^t L\| p(s) - P(n,s) \| \ ds .\] 
Gronwall's inequality (\cite[p. 24]{Hartman} or \cite[p. 241]{CoddCar}) then gives
\[ \| p(t) - P_n(t) \| \le \epsilon \exp (Lt ).\] 

\end{proof}

\newpage

\newpage

\bibliographystyle{amsalpha}

\end{document}